\crefname{equation}{}{}
\Crefname{figure}{Figure}{Figures}
\crefname{figure}{Figure}{Figures}
\crefname{example}{Example}{Example}
\crefname{theorem}{Theorem}{Theorem}
\crefname{corollary}{Corollary}{Corollary}
\crefname{lemma}{Lemma}{Lemma}
\crefname{proposition}{Proposition}{Proposition}
\crefname{assumption}{Assumption}{Assumption}
\crefname{algorithm}{Algorithm}{Algorithm}
\newlist{thmenum}{enumerate}{1} 
\setlist[thmenum]{label=\alph*{\rm)}, ref=\thetheorem(\alph*)} 
\declaretheorem[name=Theorem]{theorem}
\declaretheorem[name=Definition,style=definition,numberlike=theorem]{definition}
\declaretheorem[name=Example,style=definition,numberlike=theorem,qed=\qedsymbol]{example}
\declaretheorem[name=Proposition,numberlike=theorem]{proposition}
\declaretheorem[name=Corollary,numberlike=theorem]{corollary}
\declaretheorem[name=Assumption,numberlike=theorem]{assumption}
\declaretheorem[name=Lemma,numberlike=theorem]{lemma}
\definecolor{blu}{rgb}{0,0,1}
\definecolor{gre}{rgb}{0,.5,0}
\definecolor{red}{rgb}{1,0,0}
\DeclareMathOperator{\EssCone}{\sf EssCone}
\DeclareMathOperator{\EssSpan}{\sf EssSpan}
\newcommand{\Face}[1]{\Fscr_{\scriptscriptstyle#1}}
\newcommand{\gauge}{\gamma}
\newcommand{\As}{_{\scriptscriptstyle\Ascr}}
\DeclareMathOperator{\suppa}{\Sscr\As}
\newcommand{\Probi}{(P$_i$)}
\newcommand{\Drobi}{(D$_i$)}
\newcommand{\opa}[1]{\|#1\|\As}
\renewcommand{\T}{^\intercal}
\title{Cardinality-constrained structured data-fitting problems}
\author{\firstname{Zhenan} \lastname{Fan} }
\address{The University of British Columbia, Canada}
\email{zhenanf@cs.ubc.ca}
\author{\firstname{Huang} \lastname{Fang}}
\address{The University of British Columbia, Canada}
\email{hgfang@cs.ubc.ca}
\author{\firstname{Michael} \middlename{P.} \lastname{Friedlander}}
\address{The University of British Columbia, Canada}
\email{michael.friedlander@ubc.ca}
\thanks{Zhenan Fan and Huang Fang contributed equally to this paper.}
\keywords{convex analysis, sparse optimization, low-rank optimization, primal-retrieval}
\begin{abstract}%
  A memory-efficient framework is described for the cardinality-constrained structured data-fitting problem.  Dual-based atom-identification rules are proposed that reveal the structure of the optimal primal solution from near-optimal dual solutions. These rules allow for a simple and computationally cheap algorithm for translating any feasible dual solution to a primal solution that satisfies the cardinality constraint. Rigorous guarantees are provided for obtaining a near-optimal primal solution given any dual-based method that generates dual iterates converging to an optimal dual solution. Numerical experiments on real-world datasets support confirm the analysis and demonstrate the efficiency of the proposed approach.
\end{abstract}
\begin{document}

\maketitle


\section{Introduction}\label{sec:intro}

Consider the problem of fitting a model to data by building up model parameters as the superposition of a limited set atomic elements taken from a given dictionary. Versions of this cardinality-constrained problem appear in a number of statistical-learning applications in machine learning~\cite{tibshirani1996regression,yul06,Meinshausen06,aep08}, data mining, and signal processing~\cite{candes:2013}. In these applications, common atomic dictionaries include the set of one-hot vectors or matrices (i.e., vectors and matrices that contain only a single element) and rank-1 unit matrices. The elements of the given dictionary encode a notion of parsimony in the definition of the model parameters.

The cardinality-constrained formulation that we consider aims to
\begin{equation} \label{eq:main_prob} \tag{P}
    \find x\in\Re^n \sut f(b - Mx) \leq \alpha \tand \card_\Ascr(x) \leq k,
\end{equation}
where $f:\Re^n\to\Re$ is an $L$-smooth and convex function, $M: \Re^n \to \Re^m$ is a linear operator with $m < n$, $b \in \Re^m$ is the observation vector, and $\Ascr \subseteq \Re^n$ is the atomic dictionary. The cardinality function 
\begin{equation}\label{eq-card-fcn}
    \card_\Ascr(x) \coloneqq \inf\Set{\mathbf{nnz}(c) | x = \textstyle\sum\limits_{a \in \Ascr} c_a a, \enspace c_a \geq 0}
\end{equation}
measures the complexity of $x$ with respect to the atomic set $\Ascr$.  The loss term $f(b - Mx)$ measures the quality of the fit. When $\Ascr = \set{ \pm e_1,\pm e_2, \ldots,\pm e_n }$ is the set of signed canonical unit vectors, the function $\card_\Ascr(x)$ simply counts the number of nonzero elements in $x$. Typically $k\ll n$, which indicates that we seek a feasible model parameter $x$ that has an efficient representation in terms of $k$ atoms from the set $\Ascr$.

For the application areas that we target, the two characteristics of this feasibility problem that pose the biggest challenge to efficient implementation are the combinatorial nature of the cardinality constraint, and the high-dimensionality of the parameter space. To address the combinatorial challenge, we follow van den Berg and Friedlander~\cite{berg2008probing,berg2011sparse} and Chandrasekaran et al.~\cite{chandrasekaran2012convex}, and use the convex gauge function
\begin{equation} \label{eq:min_sum}
    \gauge\As(x) = \inf\Set{
    \textstyle\sum\limits_{a \in \Ascr} c_a ~\bigg\vert~ x = \sum\limits_{a \in \Ascr} c_a a, \ c_a \geq 0}
\end{equation}
as a tractable proxy for the cardinality function~\eqref{eq-card-fcn}; see \cref{sec:preliminaries}. In tandem with the convexity of the loss function, this function allows us to formulate three alternative relaxed convex optimization problems that, under certain conditions, have approximate solutions satisfying the feasibility problem; see problems~\eqref{prob:primal1},~\eqref{prob:primal2}, and~\eqref{prob:primal3} in~\cref{sec:probSetting}.

The high-dimensionality of the parameter space, however, may imply that it's inefficient---and maybe even practically impossible---to solve these convex relaxations because it's infeasible to store directly the approximations to a feasible solution $x$. Instead, we wish to develop methods that leverage the efficient representation that feasible solutions have in terms of the atoms in the dictionary $\Ascr$. For example, consider the case in which the dictionary is the set of symmetric ${n \times n}$ rank-one matrices, and $M$ is the trace linear operator that maps these matrices into $m$-vectors. Any method that iterates directly on the parameters $x$ requires $\BigOh(n^2+m)$ storage for the iterates and the data. An alternative is the widely-used conditional gradient method \cite{frank1956algorithm}, which requires $\BigOh( n t+m )$ storage after $t$ iterations~\cite{jaggi2013revisiting}, but also often requires a substantial number of iterations $t$ to converge. Instead of storing $x$ directly, we apply a dual method to one of the convex relaxations~\eqref{prob:primal1}, \eqref{prob:primal2}, and~\eqref{prob:primal3} (defined below); these dual methods typically require only $\BigOh(m)$ storage, and still allow us to collect information on which atoms in $\Ascr$ participate in the construction of a feasible $x$. One of the aims of this paper is to describe how to collect and use this information.

\subsection{Approach}

We propose a unified algorithm-agnostic strategy that uses any sequence of improving dual solutions to one of the convex relaxations. This dual sequence identifies an essential subset of atoms in $\Ascr$ needed to construct an $\epsilon$-infeasible solution $x$ that satisfies the conditions
\begin{equation*}
    f(b - Mx) \leq \alpha + \epsilon \quad\mbox{and}\quad \card(\Ascr; x) \leq k
\end{equation*}
for any positive tolerance $\epsilon$. These \emph{atomic-identification} rules, described in \cref{sec:atom_iden}, derive from the polar-alignment property and apply to arbitrary dictionaries $\Ascr$ \cite{fan2019alignment}. These atom-identification rules generalize earlier approaches described by El~Ghaoui~\cite{Ghaoui12} and Hare and Lewis~\cite{hare2004identifying}. Once an essential subset of $k$ atoms is identified, an $\epsilon$-feasible solution $x$ can be computed by optimizing over all positive linear combinations of this subset. This relatively small $k$-variable problem can often be solved efficiently.

We prove that when the atomic dictionary is polyhedral, we can set $\epsilon$ to zero and still identify in polynomial time a set of feasible atoms; see \cref{coro:polyhedral}. When the atomic dictionary is spectrahedral, we prove that an $\epsilon$-feasible set of atoms can be identified also in polynomial time; see \cref{coro:spectral}. 

We demonstrate via numerical experiments on real-world datasets that this approach is effective in practice.

There are three important elements in our primal-retrieval algorithm. The first element is an atom-identifier function $\EssCone_{\Ascr, k}$ that maps $M^*y$, where $y$ is any feasible dual variable, to a cone generated by $k$ atoms that are \emph{essential}. These atoms have the property that 
\[
  \EssCone_{\Ascr, k}(M^*y) \subseteq \set{x | \card(A; x) \leq k}.
\]
The explicit definition of the essential cone depends on the particular atomic set $\Ascr$. In \cref{sec:primal-retrieval}, we make it explicit for atomic sets that are polyhedral (\cref{sec:polyhedral}) and spectral (\cref{sec:spectral}).

The second element is an arbitrary function $\texttt{oracle}_{f,\Ascr,M,b}$ (such as an appropriate first-order iterative method) that generates dual iterates $y^{(t)}$ converging to the optimal dual variable $y^*$ of any of the dual problem \Drobi. It's this oracle that generates the dual estiamtes subsequently used by $\EssCone_{\Ascr,k}$.

The third algorithmic component is the reduced convex optimization problem
\begin{equation} \label{eq:primal_recover} \tag{PR}
    x^{(t)} \in \argmin{x}\set{ f(b - Mx) | x \in \EssCone_{\Ascr, k}(M^*y^{(t)})},
\end{equation}
which at each iteration constructs a primal estimate $x^{(t)}$ using the atoms identified through the dual estimate $y^{(t)}$.  The detailed algorithm is shown in \cref{alg:primal_recover}. Note that our primal-retrieval strategy doesn't aim to recover the optimal solutions to~\eqref{prob:primal1},~\eqref{prob:primal2} or~\eqref{prob:primal3}. These problems serve as a guidance for our atom-identification rule. The final output of \cref{alg:primal_recover} maybe different from the optimal solutions to~\eqref{prob:primal1}, \eqref{prob:primal2}, or~\eqref{prob:primal3}. 

\RestyleAlgo{ruled}
\SetKwInput{KwInput}{Input}
\SetKwInput{KwReturn}{Return}
\begin{algorithm}[t]
\DontPrintSemicolon{}
\caption{primal-retrieval algorithm}\label{alg:primal_recover}
\KwInput{data-fitting constraint $\alpha$, cardinality constraint $k$, atomic set $\Ascr$, loss function $f$, linear operator $M$, observation $b$, and tolerance $\epsilon\ge0$
    }
Initialize dual feasible vector $y^{(0)}$\; 
 \For{t = 1, 2, \dots}{
    $y^{(t)} \leftarrow \texttt{oracle}_{f,\Ascr,M,b}(y^{(t-1)})$\;
    $x^{(t)} \leftarrow$ solution to~\eqref{eq:primal_recover}\;
    \If{$f(b - Mx^{(t)}) \leq \alpha + \epsilon$}{break}
 }
 \KwReturn{ $x^{(t)}$ }
\end{algorithm}

\section{Related work}

Many recent approaches for atomic-sparse optimization problems are based on algorithms~\cite{fan2019bundle,DingYCTU21}. These methods, however, still need to retrieve at some point a primal solution $x$, which may require a prohibitive amount of memory for its storage. A widely used heuristic applies the truncated singular value decomposition to obtain low-rank solutions, but this heuristic is unreliable in minimizing the model misfit~\cite[Algorithm~6.4]{fan2019alignment}. Memory-efficient atomic-sparse optimization thus requires efficient and reliable methods to retrieve an atomic-sparse primal solution.

Dual approaches for nuclear-norm or trace-norm regularized problems are attractive because they enjoy optimal storage, which means that they have space complexity $\BigOh(m)$ instead of $\BigOh( n^2 )$ \cite{DingYCTU21}. For example, the bundle method for solving the Lagrangian dual formulation of semi-definite programming~\cite{helmberg2000spectral}, and the gauge dual formulation of general atomic sparse optimization problem~\cite{fan2019bundle}, exhibit promising results in practice.

A related line of research uses memory-efficient primal-based algorithms based on hard-thresholding. Some examples include gradient hard-thresholding~\cite{YuanLZ17}, periodical hard-thresholding~\cite{Allen-ZhuHHL17}, and many proximal-gradient or ADMM-based hard-thresholding algorithms~\cite{mazumder2010spectral,Lin11,hsieh2014nuclear}. These approaches are primal-based and tangential to to our purposes. We do not include them in our discussion. 

The theoretical analysis of our primal-retrieval approach is related to optimal atom identification~\cite{BurM88,hare2004identifying,hare2011identifying}, and especially to recently developed safe-screening rules for various kind of sparse optimization problems~\cite{Ghaoui12,wang2013lasso,liu2014safe,WangZLWY14,Raj2015ScreeningRF,BonnefoyERG15,XiangWR17,NdiayeFGS17,ZhangHLYCHW17,kuang2017screening,Atamtrk2020SafeSR,Bao20}. One of our main results, given by \Cref{thm:p0}, generalizes the gap safe-screening rule developed by Ndiaye et al.~\cite{ndiaye2016gap} to general atomic-sparse problems and to more general problem formulations. Some of the techniques used in our analysis are related to the facial reduction strategy from Krislock and Wolcowicz~\cite{krislock2010explicit}.

\section{Preliminaries}\label{sec:preliminaries}

We introduce in this section the main tools of convex analysis used to understand atomic sparsity.

The gauge function~\eqref{eq:min_sum} is always convex, nonnegative, and positively homogeneous. This function is finite only at points contained within the cone
\begin{equation*}
  \cone(\Ascr) \coloneqq \Set{x = \sum\limits_{a \in \Ascr} c_a a ~\bigg\vert~ c_a \geq 0}
\end{equation*}
generated from the elements of the set $\Ascr$.  The gauge is not necessarily a norm because it may not be symmetric (unless $\Ascr$ is centrosymmetric), may vanish at points other than the origin, and may not be finite valued (unless $\Ascr$ contains the origin in the interior of its convex hull). Throughout, we make the blanket assumption that the dictionary $\Ascr\subseteq\Re^n$ is compact, and that the origin is contained in its convex hull. The assumption on the origin ensures that the gauge function is continuous. The compactness assumption isn't strictly necessary for many of our conclusions, but does considerably simplify the analysis. The set $\Ascr$ may be nonconvex, which is the case, for example, if it consists of a discrete set of two or more items. 

The definition of the gauge function makes explicit this function's role as a convex penalty for atomic sparsity. The \emph{atomic support} of a vector $x$ to be the collection of atoms $a\in\Ascr$ that contribute positively to the conic decomposition implied by the value $\gamma\As(x)$~\cite[Definition~2.1]{fan2019alignment}.

\begin{definition}[Atomic support]
  A subset of atoms $\suppa(x)\subset\Ascr$ is a \emph{support set} for $x$ with respect to $\Ascr$ if every atom $a\in\suppa(x)$ satisfies
  \begin{equation*}
    \gauge\As(x) = \sum_{\mathclap{a\in\suppa(x)}} c_a,
    \qquad x = \sum_{\mathclap{a\in\suppa(x)}} c_a a,
    \text{and} c_a > 0\quad \forall a\in\suppa(x).
  \end{equation*}
\end{definition}
For example, the support set $\suppa(x)$ for the atomic set of 1-hot unit vectors $\Ascr = \set{ e_i \mid i = 1,2,\ldots,n }$ coincides with the nonzero elements of $x$ with the corresponding sign. The support function to the set $\Ascr$ is given by $\sigma\As(z):=\sup_{a\in\Ascr}\ip{a}{z}$. Because $\Ascr$ is compact, every direction $d\in\Re^n$ generates a supporting hyperplane to the convex hull of $\Ascr$. The atoms contained in that supporting hyperplane are said the be \emph{exposed} by $d$. The following definition also includes the notion of atoms that are approximately exposed.

\begin{definition}[Exposed and $\epsilon$-exposed atoms]\label{def:face} 
  The exposed atoms and $\epsilon$-exposed atoms, respectively, of a set $\Ascr\subseteq\Re^n$ in the direction $z \in \Re^n$ are defined by the sets
  \begin{align*}
    \Escr\As(z) \coloneqq \set{a \in \Ascr \mid \ip{a}{z} = \sigma\As(z)} \tand \Escr\As(z, \epsilon) \coloneqq \set{a \in \Ascr \mid \ip{a}{z} \geq \sigma\As(z) -
      \epsilon},
  \end{align*}
  where $\sigma\As(z) \coloneqq \sup_{a \in \Ascr} \ip{a}{z}$ is the support function
  with respect to $\Ascr$.
\end{definition}
When $\epsilon = 0$, the $\epsilon$-exposed atoms coincide with the exposed atoms. 

We list in \Cref{tab:common-atoms} commonly used atomic sets, their
corresponding gauge and support functions, and atomic supports.

\begin{table*}
\centering
\begin{tabular}{@{}lcccc@{}}  
\toprule 
\small
Atomic sparsity & $\Ascr$  & 
  $\gauge\As(x)$ & $\suppa(x)$ & $\sigma\As(z)$ \\
\midrule
non-negative & $\cone(\set{\bm{e}_1,\ldots,\bm{e}_n})$  &
  $\delta_{\geq 0}$ & $\cone(\set{\bm{e}_i \mid x_i > 0})$ & $\delta_{\leq 0}$\\

element-wise & $\set{ \pm \bm{e}_1,\ldots,\pm \bm{e}_n }$  & 
  $\|\cdot\|_1$ & $\set{\sign(x_i)\bm{e}_i \mid x_i \neq 0}$ & $\|\cdot\|_\infty$\\ 

low rank & $\set{uv^T \mid \|u\|_2=\|v\|_2 = 1}$  & 
  $\|\cdot\|_*$  & singular vectors of $x$ & $\|\cdot\|_2$\\

PSD \& low rank & $\set{uu^T \mid \|u\|_2 = 1}$  & 
  $\trace + \delta_{\succeq 0}$  & eigenvectors of $x$ & $\max\set{\lambda_{\max}, 0}$\\ 
\bottomrule 
\end{tabular}
\caption[Commonly used sets of atoms and their gauge and support function
representations]{Commonly used atomic sets and the corresponding gauge and
  support functions~\cite{fan2019bundle}. The indicator function
  $\delta_{\Cscr}(x)$ is zero if $x$ is in the set $\Cscr$ and $+\infty$ otherwise. 
  The commonly used group-norm is also an atomic norm~\cite[Example~4.7]{fan2019alignment}. The functions $\|X\|_*$ and $\|X\|_2$, respectively, correspond to the nuclear norm (sum of singular values) and spectral norm (maximum singular value) of a matrix $X$.
\label{tab:common-atoms}}  
\end{table*}

\section{Atomic-sparse optimization}
\label{sec:probSetting}

We introduce in this section convex relaxations to the structured data-fitting problem~\eqref{eq:main_prob}. In particular, we consider the following three related gauge-regularized optimization
problems:
\begin{align} 
  & \minimize{x}\enspace p_1(x) := f(b - Mx) + \lambda\gauge\As(x), \label{prob:primal1} \tag{P$_1$} \\
  & \minimize{x}\enspace p_2(x) := f(b - Mx) \enspace\text{subject to}\enspace \gauge\As(x) \leq \tau, \label{prob:primal2} \tag{P$_2$} \\
  & \minimize{x}\enspace p_3(x) := \gauge\As(x)  \enspace\text{subject to}\enspace f(b - Mx) \leq \alpha. \label{prob:primal3} \tag{P$_3$} 
\end{align}
It's well known that under mild conditions, these three formulations are equivalent for appropriate choices of the positive parameters $\lambda, \tau$, and $\alpha$~\cite{FrieTsen:2006}. Practitioners often prefer one of these formulations depending on their application. For example, tasks related to machine learning, including feature selection and recommender systems, typically feature one of the first two formulations~\cite{tibshirani1996regression,yul06,Meinshausen06}.  On the other hand, applications in signal processing and related fields, such as as compressed sensing and phase retrieval, often use the third formulation~\cite{berg2008probing,candes:2013}. 

Our primal-retrieval strategy relies on the hypothesis that the atomic-sparse optimization problems~\eqref{prob:primal1},~\eqref{prob:primal2} and~\eqref{prob:primal3} are reasonable convex relaxations to the structured data-fitting problem~\eqref{eq:main_prob}, in the sense that the corresponding optimal solutions are feasible for~\eqref{eq:main_prob}. We formalize this in the following assumption.

\begin{assumption}\label{ass:blanket}
   Let $x^*$ denote an optimal solution to \Probi, $i=1,2,3$. Then $x^*$ is feasible for \eqref{eq:main_prob}, i.e.,
   \[f(b - Mx^*) \leq \alpha \tand |\suppa(x^*)| \leq k.\]
\end{assumption}

As described in \cref{sec:intro}, the Fenchel-Rockafellar duals for these problems have typically smaller space complexity. These dual problems can be formulated as
\begin{align} 
  & \minimize{y}\enspace d_1(y) := f^*(y) - \ip{b}{y} \enspace\text{subject to}\enspace \sigma\As(M^*y) \leq \lambda, \label{prob:dual1} \tag{D$_1$}\\
  & \minimize{y}\enspace d_2(y) := f^*(y) - \ip{b}{y} + \tau\sigma\As(M^*y), \label{prob:dual2} \tag{D$_2$}\\
  & \minimize{y}\enspace \inf_{\beta > 0} \,d_3(y, \beta)
    := \beta \left(
      f^*\left( y/\beta \right) + \alpha
    \right) - \ip{b}{y} \enspace\text{subject to}\enspace \sigma\As(M^* y) \leq 1, \label{prob:dual3} \tag{D$_3$}
\end{align}
where $f^*(y) = \sup_w \enspace \ip{y}{w} - f(w)$ is the convex conjugate
function of $f$, and $M^*:\Re^m \to \Re^n$ is the adjoint operator of $M$, which satisfies $\ip{Mx}{y} = \ip{x}{M^*y}$ for all $x\in\Re^n$ and $y\in\Re^m$. The derivation of these dual problems can be found in \cref{app:duals}. 


\section{Atom identification} \label{sec:atom_iden}

We demonstrate in this section how an optimal dual solution can be used to identify essential atoms that participate in the primal solutions. In order to develop atomic-identification rules that apply to arbitrary atomic sets $\Ascr\subseteq\Re^n$---even those that are uncountably infinite---we require generalized notions of active constraint sets. In linear programming, for example, the simplex multipliers give information about the optimal primal support. By analogy, our atomic-identification rules give information about the essential atoms that participate in the support of the primal optimal solutions. In addition, we extend the identification rules to approximate the essential atoms from approximate dual solutions. 

We build on the following
result, due to Fan et al.~\cite[Proposition~4.5 and Theorem~5.1]{fan2019alignment}.
\begin{theorem}[Atom identification]\label{thm:opt_supp_id} Let
  $x^*$ and $ y^*$ be optimal primal-dual solutions for problems \Probi~and \Drobi, with
  $i=1,2,3$. Then
  \begin{equation}\label{eq:1exact}
    \suppa(x^*) \subseteq \Escr\As(M^*y^*).
  \end{equation}
\end{theorem}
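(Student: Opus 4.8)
The plan is to reduce all three formulations to a single polar‑alignment identity relating $x^*$ and $M^*y^*$, and then to exploit the positivity of the coefficients in an atomic decomposition of $x^*$. At the outset I would dispose of the degenerate case $\gamma\As(x^*)=0$: the defining relations of a support set give $\gamma\As(x^*)=\sum_{a\in\suppa(x^*)}c_a$ with every $c_a>0$, which forces $\suppa(x^*)=\emptyset$, so the inclusion holds vacuously. Henceforth assume $\gamma\As(x^*)>0$.

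Next I would invoke the first‑order optimality conditions of the primal problem (valid under a standard constraint qualification, which holds here because \eqref{eq:main_prob} is feasible by \cref{ass:blanket}). Since $f$ is $L$‑smooth, put $g^*\defd\nabla f(b-Mx^*)$. Stationarity for \eqref{prob:primal1} reads $M^*g^*\in\lambda\,\partial\gamma\As(x^*)$; for \eqref{prob:primal2}, with $\mu\ge 0$ the multiplier of $\gamma\As(x)\le\tau$, it reads $M^*g^*\in\mu\,\partial\gamma\As(x^*)$; and for \eqref{prob:primal3}, with $\beta\ge 0$ the multiplier of $f(b-Mx)\le\alpha$, it reads $\beta M^*g^*\in\partial\gamma\As(x^*)$. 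In each case the matching optimal dual variable of \Drobi{} is $y^*=g^*$ (for $i=1,2$) or $y^*=\beta g^*$ (for $i=3$), as the dual derivation in \cref{app:duals} shows; hence, for a scalar $c>0$ equal to $\lambda$, $\mu$, or $1$ respectively,
\[
  M^*y^*\in c\,\partial\gamma\As(x^*).
\]
Using the standard description of the gauge subdifferential, $\partial\gamma\As(x)=\{\,z\mid\sigma\As(z)\le 1,\ \ip{z}{x}=\gamma\As(x)\,\}$, this membership is equivalent to dual feasibility $\sigma\As(M^*y^*)\le c$ together with the alignment identity $\ip{M^*y^*}{x^*}=c\,\gamma\As(x^*)$. (The sub‑cases $\mu=0$ or $\beta=0$ force $M^*y^*=0$ or $\gamma\As(x^*)=0$, hence $\Escr\As(M^*y^*)=\Ascr$ or $\suppa(x^*)=\emptyset$, so the inclusion is again trivial.)

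Finally---the crux---I would fix an arbitrary support set, denoted $S$, so that $x^*=\sum_{a\in S}c_a a$ with each $c_a>0$ and $\gamma\As(x^*)=\sum_{a\in S}c_a$. The alignment identity then expands as
\[
  c\,\gamma\As(x^*)=\ip{M^*y^*}{x^*}=\sum_{a\in S}c_a\,\ip{M^*y^*}{a}\le\sum_{a\in S}c_a\,\sigma\As(M^*y^*)\le c\sum_{a\in S}c_a=c\,\gamma\As(x^*),
\]
so every inequality is in fact an equality. Since each $c_a>0$, equality in the leftmost inequality forces $\ip{M^*y^*}{a}=\sigma\As(M^*y^*)$ for all $a\in S$, i.e.\ $S\subseteq\Escr\As(M^*y^*)$; as $S$ was an arbitrary support set $\suppa(x^*)$, this is the claim. (This step is just the equality case of the generalized Cauchy--Schwarz inequality $\ip{x}{z}\le\gamma\As(x)\,\sigma\As(z)$, valid for $x\in\cone(\Ascr)$.)

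I expect the main obstacle to be bookkeeping rather than anything conceptual: one has to line up the Fenchel--Rockafellar dual of each \Probi{} with the stated \Drobi{} so that the theorem's $y^*$ is correctly identified with the (scaled) gradient $\nabla f(b-Mx^*)$---in particular, handling the homogeneity variable $\beta$ in \eqref{prob:dual3} and confirming that its optimal value is the KKT multiplier of the constraint in \eqref{prob:primal3}. Once the membership $M^*y^*\in c\,\partial\gamma\As(x^*)$ is secured, the displayed inequality chain settles all three formulations uniformly.
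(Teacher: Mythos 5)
The paper does not prove this theorem itself: it is imported verbatim from Fan et al.~\cite{fan2019alignment} (Proposition~4.5 and Theorem~5.1 there), and the surrounding text explicitly says the identification rules ``derive from the polar-alignment property.'' Your argument---KKT membership $M^*y^*\in c\,\partial\gamma\As(x^*)$, the gauge-subdifferential characterization giving the alignment identity $\ip{M^*y^*}{x^*}=c\,\gamma\As(x^*)$ with $\sigma\As(M^*y^*)\le c$, and the equality-case analysis over a positive-coefficient atomic decomposition---is a correct, self-contained reconstruction of exactly that polar-alignment mechanism, so it matches the intended proof rather than offering a genuinely different route.
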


The following theorem generalizes this result to show similar atomic support identification properties that also apply to approximate dual solutions. In particular, given a feasible dual variable $y$ close to $y^*$, the support of $x^*$ is contained in the set of $\epsilon$-exposed atoms that includes $\Escr\As(M^*y^*)$. 
\begin{theorem}[Generalized atom identification]\label{thm:p0}
  Let $x_i$ and $y_i$ be feasible primal and dual vectors, respectively for
  problems \Probi\ and \Drobi, with $i = 1, 2, 3$. Then
  \begin{equation}\label{eq:1} 
    \suppa(x_i^*) \subseteq \Escr\As(M^*y_i, \epsilon_i),
  \end{equation}
  where each $\epsilon_i$ is defined for problem $i$ by
  \begin{thmenum} \setlength\itemsep{-0.0em}
  \item \label{thm:p1} $\epsilon_1 = \opa{M}\sqrt{2L \left( d_1(y_1) - d_1(y_1^*) \right)}$,

  \item \label{thm:p2} $\epsilon_2 = 2\opa{M}\sqrt{2L \left( d_2(y_2) - d_2(y_2^*) \right)}$,
    
  \item \label{thm:p3} $\epsilon_3 = 2\opa{M}\sqrt{2\bar{\beta}L (
        \max\set{d_3(y_3, \underline{\beta}), d_3(y_3, \bar{\beta})} - d_3( y_3^*, \beta^*) ) }$, 
  \end{thmenum}
  where $\underline{\beta}$ and $\bar{\beta}$ are positive lower and
  upper bounds, respectively, for $\beta^*$, and $\|M\|\As:=\max_{a\in\Ascr}\|Ma\|_2$ is the induced atomic operator norm.
\end{theorem}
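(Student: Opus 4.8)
The plan is to verify the inclusion~\eqref{eq:1} one atom at a time: for each $i\in\set{1,2,3}$ and each atom $a\in\suppa(x_i^*)$ I would show that $\sigma\As(M^*y_i)-\ip{a}{M^*y_i}\le\epsilon_i$, which by \cref{def:face} is precisely the statement $a\in\Escr\As(M^*y_i,\epsilon_i)$. The case $\suppa(x_i^*)=\emptyset$ is vacuous, so I may assume $x_i^*\ne0$ and $\gauge\As(x_i^*)>0$. Writing $y_i^*$ for an optimal dual solution of \Drobi, the workhorse identity is
\[
  \sigma\As(M^*y_i)-\ip{a}{M^*y_i}
  = [\,\sigma\As(M^*y_i)-\sigma\As(M^*y_i^*)\,]
  + [\,\sigma\As(M^*y_i^*)-\ip{a}{M^*y_i^*}\,]
  + \ip{Ma}{y_i^*-y_i}.
\]
The middle bracket vanishes by \cref{thm:opt_supp_id}, since $a\in\suppa(x_i^*)\subseteq\Escr\As(M^*y_i^*)$ forces $\ip{a}{M^*y_i^*}=\sigma\As(M^*y_i^*)$. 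The last term I would control with Cauchy--Schwarz and the definition of the induced atomic norm: $\ip{Ma}{y_i^*-y_i}\le\|Ma\|_2\,\|y_i-y_i^*\|_2\le\opa M\,\|y_i-y_i^*\|_2$.

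For the first bracket I would split off the constrained dual from the unconstrained ones. For~\eqref{prob:dual1}, feasibility of $y_1$ gives $\sigma\As(M^*y_1)\le\lambda$, while at the optimum the constraint is active, $\sigma\As(M^*y_1^*)=\lambda$ (this uses $x_1^*\ne0$, via the same complementarity that underlies \cref{thm:opt_supp_id}); hence the first bracket is nonpositive and $\sigma\As(M^*y_1)-\ip{a}{M^*y_1}\le\opa M\,\|y_1-y_1^*\|_2$. For~\eqref{prob:dual2} and~\eqref{prob:dual3} no such one-sided inequality is available, so I would instead use the subadditivity of the support function, $\sigma\As(M^*y_i)-\sigma\As(M^*y_i^*)\le\sigma\As\bigl(M^*(y_i-y_i^*)\bigr)=\sup_{a'\in\Ascr}\ip{Ma'}{y_i-y_i^*}\le\opa M\,\|y_i-y_i^*\|_2$. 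This second appearance of $\opa M\,\|y_i-y_i^*\|_2$ is exactly what doubles the constant in $\epsilon_2$ and $\epsilon_3$, giving $\sigma\As(M^*y_i)-\ip{a}{M^*y_i}\le2\opa M\,\|y_i-y_i^*\|_2$ for $i=2,3$.

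It then remains to bound $\|y_i-y_i^*\|_2$ by the dual suboptimality gap, which I would do via strong convexity, using the standard fact that an $L$-smooth convex $f$ has a $\tfrac1L$-strongly convex conjugate $f^*$. For~\eqref{prob:dual1}, $d_1=f^*-\ip{b}{\cdot}$ is $\tfrac1L$-strongly convex and $y_1^*$ minimizes it over the convex set $\{y:\sigma\As(M^*y)\le\lambda\}$, so the first-order optimality condition gives $d_1(y_1)-d_1(y_1^*)\ge\tfrac1{2L}\|y_1-y_1^*\|_2^2$, i.e.\ $\|y_1-y_1^*\|_2\le\sqrt{2L(d_1(y_1)-d_1(y_1^*))}$; combining with the previous step yields the expression for $\epsilon_1$. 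For~\eqref{prob:dual2}, $d_2=f^*-\ip{b}{\cdot}+\tau\sigma\As(M^*\cdot)$ is still $\tfrac1L$-strongly convex (a strongly convex function plus a convex one) and $y_2^*$ is an unconstrained minimizer, so the same argument gives $\|y_2-y_2^*\|_2\le\sqrt{2L(d_2(y_2)-d_2(y_2^*))}$ and hence the expression for $\epsilon_2$. For~\eqref{prob:dual3} I would fix $\beta\in(0,\bar\beta]$ and note that the perspective term $y\mapsto\beta f^*(y/\beta)$ has Hessian $\tfrac1\beta\nabla^2 f^*(y/\beta)\succeq\tfrac1{\bar\beta L}I$, so $d_3(\cdot,\beta)$ is $\tfrac1{\bar\beta L}$-strongly convex in $y$; moreover $y_3^*$ minimizes $d_3(\cdot,\beta^*)$ over $\{y:\sigma\As(M^*y)\le1\}$, because $d_3(y_3^*,\beta^*)=\inf_\beta d_3(y_3^*,\beta)\le\inf_\beta d_3(y,\beta)\le d_3(y,\beta^*)$ for every feasible $y$. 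First-order optimality then gives $d_3(y_3,\beta^*)-d_3(y_3^*,\beta^*)\ge\tfrac1{2\bar\beta L}\|y_3-y_3^*\|_2^2$, and since $\beta^*\in[\underline\beta,\bar\beta]$ and $\beta\mapsto d_3(y_3,\beta)$ is convex (a perspective of $f^*$) I can replace the unknown $d_3(y_3,\beta^*)$ by $\max\set{d_3(y_3,\underline\beta),d_3(y_3,\bar\beta)}$; chaining these inequalities produces the $\|y_3-y_3^*\|_2$ bound behind $\epsilon_3$.

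I expect the main obstacle to be the~\eqref{prob:dual3} case, which forces one to use the perspective structure of $d_3$ twice --- strong convexity in $y$ with a $\beta$-dependent modulus, and convexity in $\beta$ to pass from the unknown $\beta^*$ to the endpoint maximum over $[\underline\beta,\bar\beta]$ --- while keeping all constants aligned. The remaining points that need care are the constraint-activity fact $\sigma\As(M^*y_1^*)=\lambda$ for~\eqref{prob:dual1} (and the complementarity giving $\ip{a}{M^*y_i^*}=\sigma\As(M^*y_i^*)$ in general), the trivial reduction when $\suppa(x_i^*)=\emptyset$, and checking that each dual objective really does attain its minimum at the claimed point over the stated convex feasible set, so that the first-order optimality inequality is legitimate.
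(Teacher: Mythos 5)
Your proposal is correct and follows essentially the same route as the paper's proof: reduce to showing $\Escr\As(M^*y_i^*)\subseteq\Escr\As(M^*y_i,\epsilon_i)$ via \cref{thm:opt_supp_id}, bound $\ip{Ma}{y_i^*-y_i}$ by $\opa{M}\|y_i-y_i^*\|_2$, handle the term $\sigma\As(M^*y_i)-\sigma\As(M^*y_i^*)$ by constraint activity for \eqref{prob:dual1} and by subadditivity (doubling the constant) for \eqref{prob:dual2} and \eqref{prob:dual3}, and control $\|y_i-y_i^*\|_2$ through the $\tfrac1L$-strong convexity of $f^*$ together with the first-order optimality inequality, finishing \eqref{prob:dual3} with convexity in $\beta$ to replace $d_3(y_3,\beta^*)$ by the endpoint maximum. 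Your explicit justification that $y_3^*$ minimizes $d_3(\cdot,\beta^*)$ over the dual feasible set, and the remark that $\sigma\As(M^*y_1^*)=\lambda$ rests on $x_1^*\neq0$, are points the paper states without comment, but the argument is the same.
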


\Cref{thm:p0} asserts that the underlying atomic support of $x_i^*$ is contained in the set of the $\epsilon$-exposed atoms of $M^* y_i$.  Moreover, when $y_i\to y_i^*$ (and, for problem~\eqref{prob:dual3}, the bounds $\underline\beta\to\beta^*$ and $\bar\beta\to\beta^*$), each $\epsilon_i\to0$, and thus~\eqref{eq:1} implies that we have a tighter containment for the optimal atomic support.  The proofs for parts (a) and (b) of \cref{thm:p0} depend on the strong convexity of $f^*$, which is implied by the Lipschitz smoothness of $f$~\cite[Theorem 4.2.1]{hiriart-urruty01}. This convenient property, however, is not available for part (c) because the dual objective of \eqref{prob:dual3} is the perspective map of $f^* + \alpha$, which is not strongly convex~\cite{aravkin2018foundations}. We resolve this technical difficulty by instead imposing the additional assumption that bounds are available on the dual optimal variable $\beta^*$. Appendix~\ref{app:bounds} describes how to obtain these bounds during the course of the level-set method developed by Aravkin et al.~\cite{aravkin2016levelset}.

The gap safe-screening rule developed by Ndiaye et al.~\cite{NdiayeFGS17} is a special case of \cref{thm:p0} that applies only to~\eqref{prob:primal1} for the particular case in which $\gamma\As$ is the one-norm. We also note that \cref{thm:p0} states a tighter bound on $\epsilon_1$ that relies on the optimal primal value $d_1(y_1^*)$ rather than the primal value at an arbitrary primal-feasible point.

\section{Primal retrieval} \label{sec:primal-retrieval}

\Cref{thm:p0} serves mainly as a technical tool for error bound analysis, in particular because it's impractical to compute or approximate $\epsilon_i$. However, the inclusions~\eqref{eq:1exact} and~\eqref{eq:1}, respectively, of \cref{thm:opt_supp_id,thm:p0} motivate us to define an atom-identifier function $\EssCone_{\Ascr, k}$ that depends on the dual variable $y$ and satisfies the inclusions 
\[\cone(\Escr\As(M^*y)) \subseteq \EssCone_{\Ascr, k}(M^*y) \subseteq \cone(\Escr\As(M^*y, \epsilon)).\]

The next two sections demonstrate how to construct such a function for polyhedral and spectral atomic sets, which are two important examples that appear frequently in practice. With this function we can thus implement the primal-recovery problem required by Step~5 of \cref{alg:primal_recover}. Moreover, we show how to use the error bounds of \cref{thm:p0} to analyze the atomic-identification properties of the resulting algorithm.

\subsection{Primal-retrieval for polyhedral atomic sets} \label{sec:polyhedral}

We formalize in this section a definition for the function $\EssCone_{\Ascr, k}$ for the case in which $\Ascr$ is a finite set of vectors, which implies that the convex hull is  polyhedral. Given a feasible dual vector $y$, consider the top-$k$ atoms in $\Ascr$ with respect to the inner product with the vector $M^*y$:
\begin{equation} \label{eq:top-k}
    \Ascr_k := \{a_1, \dots, a_k\} \subseteq \Ascr \text{such that} \ip{M^*y}{a_i} \geq \ip{M^*y}{a} \enspace\forall i \in [k] ~\mbox{and}~ a \in \Ascr\setminus\{a_1, \dots, a_k\}.
\end{equation}
Note that there may be many sets of $k$ atoms that satisfy this property. We then construct the cone of essential atoms as the convex conic hull generated from this set of top-$k$ atoms:
\begin{equation*}
  \EssCone_{\Ascr, k}(M^*y) \coloneqq \cone\Ascr_k.
\end{equation*}
Thus, the primal-retrieval computation in step~5 of \cref{alg:primal_recover} is given by 
\begin{equation*}
\hat x = \sum_{i=1}^k \hat c_ia_i, 
\end{equation*}
where
\begin{equation} \label{eq-top-k-recover}
  \hat c \in \argmin{c \in \Re^k_+} f_k(c),
  \text{with} f_k(c) := f\left( b - M\sum_{i=1}^k c_ia_i \right),
\end{equation}
is the $k$-vector of coefficients obtained by minimizing the reduced objective over a $k$-dimensional polyhedron defined by the top-k atoms.

\begin{example}[Sparse vector recovery] \label{ex:bpdn}

We consider the problem of recovering a sparse vector $x^\natural$ from noisy observations $b:=Mx^\natural + \eta$, where $M:\Re^n \to \Re^m$ is a given measurement matrix and $\eta \in \Re^m$ is standard Gaussian noise. For some expected noise level $\alpha > 0$, the sparse recovery problem can be thus be expressed as 
\begin{equation*}
    \find x\in\Re^n \sut \|b - Mx\|_2 \leq \alpha \tand \texttt{nnz}(x) \leq k,
\end{equation*}
which corresponds to~\eqref{eq:main_prob} with $f=\|\cdot\|_2$ and with the atomic set
\begin{equation}\label{eq-signed-e}
  \Ascr = \{\pm e_1, \dots, \pm e_n\},
\end{equation}
where each $e_i$ is the $i$th canonical unit vector. The basis pursuit denoising (BPDN) approach approximates this problem by replacing the cardinality constraint with an optimization problem that minimizes the 1-norm of the solution:
\begin{equation} \label{eq:bpdn}
  \minimize{x\in\Re^n} \|x\|_1  \enspace\mbox{subject to}\enspace \|Mx - b\|_2 \leq \alpha;
\end{equation}
see Chen et al.~\cite{cds98}. This convex relaxation corresponds to problem \eqref{prob:primal3}.

There are many dual methods that generate iterates $y^{(t)}$ converging to the optimal dual solution to~\eqref{eq:bpdn}, including the level-set method coupled with the dual conditional-gradient suboracle, as described by~\cite{aravkin2016levelset,fan2019alignment}. The resulting primal-retrieval strategy for Step~5 of \cref{alg:primal_recover} can thus be implemented by executing the following steps:
\begin{enumerate}
    \item (Top-$k$ atoms) Find the top $k$ indices $\{i_1, \dots, i_k\} \subset [n]$ of the vector $M^*y^{(t)}$ with largest absolute value and gather their corresponding signs $s_i \coloneqq \sign([M^*y^{(t)}]_i)$ for $i \in \{i_1, \dots, i_k\}$. The top-$k$ atoms are thus $\Ascr_k=\set{s_{i_1}e_{i_1},\ldots,s_{i_k}e_{i_k}}$; see~\eqref{eq:top-k}.
    \item (Retrieve coefficients) Solve the reduced problem~\eqref{eq-top-k-recover}, where in this case,
    \begin{equation*}
        c^{(t)} \in \argmin{c \in \Re_+^k} f_k(c),
        \text{where} f_k(c) = \|M[s_{i_1}e_{i_1} \cdots s_{i_k}e_{i_k}]c - b\|_2.
    \end{equation*}
    This is a nonnegative least-squares problem for which many standard algorithms are available. For example, an accelerated projected gradient descent method requires $\BigOh( m k \log(1/\epsilon) )$ iterations when $M[s_{i_1}e_{i_1} \dots s_{i_k}e_{ik}]$ has full column rank.
    \item (Termination) Step~6 of the \cref{alg:primal_recover} is implemented simply by verifying that $f_k(c^{(t)}) \leq \alpha$. (As verified by \cref{coro:polyhedral}, we may take $\epsilon=0$ in this polyhedral case.) Thus, we can terminate the algorithm and return the primal variable 
    \[x^{(t)} = [s_{i_1}e_{i_1} \dots s_{i_k}e_{ik}]c^{(t)},\]
    which is the superposition of the top-$k$ atoms.  Otherwise, the algorithm proceeds to the next iteration. 
\end{enumerate}
We describe numerical experiments for the sparse vector recovery problem in \cref{sec:bpdn}. 
\end{example}

\subsubsection{Iteration complexity}\label{sec-iteration-complexity}

In order to guarantee the quality of the recovered solution, we rely on a notion of degeneracy introduced by Nutini et al.~\cite{nutini2019active}. 

\begin{definition}
  Let $x^*$ and $y^*$, respectively, be optimal primal and dual solutions for problems \Probi\ and \Drobi, where $\Ascr$ is polyhedral. Let $\delta$ be a positive scalar. The problem pair (\Probi, \Drobi) is $\delta$-nondegenerate if for any $a\in\Ascr$, either $a\in\suppa(x^*)$ or $\ip{a}{M^*y^*} \leq \sigma\As(M^*y^*) - \delta$. 
\end{definition}

The next proposition guarantees a finite-time atom identification property when the atomic set is polyhedral.

\begin{proposition}[Finite-time atom-identification] \label{prop:polyhedral} For each problem $i
  = 1, 2, 3$, let $\{y_i^{(t)}\}_{t=1}^\infty$ be a sequence that converges to an optimal dual solution $y_i^*$. If the atomic set $\Ascr$ is polyhedral and the problem pair (\Probi, \Drobi) is $\delta$-nondegenerate, then there exists $T > 0$ such that 
  \[
   \EssCone_{\Ascr, k}(M^*y^{(t)}) \supseteq \Escr\As( M^* y_i^*) 
  \text{and}
  x^{(t)} \mbox{ is feasible for }~\eqref{eq:main_prob} 
  \quad \forall t>T.\]
  It follows that \cref{alg:primal_recover} will terminate in $T$ iterations regardless of the tolerance $\epsilon$. 
\end{proposition}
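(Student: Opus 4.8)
The plan is to combine the exact atom-identification inclusion $\suppa(x_i^*)\subseteq\Escr\As(M^*y_i^*)$ supplied by \cref{thm:opt_supp_id} with the $\delta$-nondegeneracy hypothesis, and then to run a short perturbation argument on the finitely many linear functionals $a\mapsto\ip{a}{M^*y}$, $a\in\Ascr$. Throughout I fix $i\in\{1,2,3\}$, abbreviate $z^{(t)}:=M^*y_i^{(t)}$ and $z^*:=M^*y_i^*$ (so $z^{(t)}\to z^*$ by continuity of $M^*$), and write $\Escr^*:=\Escr\As(z^*)$.

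First I would pin down the structure of $\Escr^*$. Any atom $a\in\Escr^*$ satisfies $\ip{a}{z^*}=\sigma\As(z^*)>\sigma\As(z^*)-\delta$, so the second alternative in the definition of $\delta$-nondegeneracy cannot hold and hence $a\in\suppa(x_i^*)$; combined with the reverse inclusion of \cref{thm:opt_supp_id} this gives $\Escr^*=\suppa(x_i^*)$, and therefore $|\Escr^*|\le k$ by \cref{ass:blanket}. The same reasoning also records the uniform margin
\[
  \sigma\As(z^*)-\max_{a'\in\Ascr\setminus\Escr^*}\ip{a'}{z^*}\;\ge\;\delta .
\]
Using that $\Ascr$ is finite and $z^{(t)}\to z^*$, I would then pick $T$ so that $|\ip{a}{z^{(t)}}-\ip{a}{z^*}|\le\delta/4$ for every $a\in\Ascr$ and every $t>T$. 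For such $t$ every $a\in\Escr^*$ has $\ip{a}{z^{(t)}}\ge\sigma\As(z^*)-\delta/4$ while every $a'\in\Ascr\setminus\Escr^*$ has $\ip{a'}{z^{(t)}}\le\sigma\As(z^*)-3\delta/4$, so the atoms of $\Escr^*$ are strictly separated above all the others in the ordering induced by $z^{(t)}$. Since $|\Escr^*|\le k$, every admissible top-$k$ set $\Ascr_k$ from~\eqref{eq:top-k} must therefore contain $\Escr^*$ — the strict inequality is what forbids any tie-break from discarding an exposed atom — which yields the first claim, $\EssCone_{\Ascr,k}(M^*y_i^{(t)})=\cone\Ascr_k\supseteq\Escr\As(M^*y_i^*)$.

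For the feasibility claim I would observe that $\suppa(x_i^*)=\Escr^*\subseteq\Ascr_k$, so the conic decomposition of $x_i^*$ supported on $\suppa(x_i^*)$ shows $x_i^*\in\cone\Ascr_k=\EssCone_{\Ascr,k}(M^*y_i^{(t)})$; that is, $x_i^*$ is feasible for the reduced problem~\eqref{eq:primal_recover}. Optimality of $x^{(t)}$ for~\eqref{eq:primal_recover} together with \cref{ass:blanket} then gives $f(b-Mx^{(t)})\le f(b-Mx_i^*)\le\alpha$, and $x^{(t)}\in\cone\Ascr_k$ with $|\Ascr_k|=k$ gives $\card_\Ascr(x^{(t)})\le k$, so $x^{(t)}$ is feasible for~\eqref{eq:main_prob} for all $t>T$. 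Finally, since $\alpha\le\alpha+\epsilon$ for every $\epsilon\ge0$, the stopping test in Step~6 of \cref{alg:primal_recover} is met at the first $t>T$, so the algorithm terminates after finitely many iterations uniformly in $\epsilon$.

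I expect the only genuinely non-routine point — hence the main obstacle — to be the identity $\Escr\As(M^*y_i^*)=\suppa(x_i^*)$, and in particular the cardinality bound $|\Escr\As(M^*y_i^*)|\le k$ that it produces: without it the top-$k$ rule could fail to capture an exposed atom and the retrieved $x^{(t)}$ need not be feasible. This is precisely where $\delta$-nondegeneracy, \cref{thm:opt_supp_id}, and \cref{ass:blanket} must be used in concert. The remaining pieces are elementary; the one subtlety to watch is that~\eqref{eq:top-k} determines $\Ascr_k$ only up to ties, so the perturbation step must be carried out with a strict separation margin in order that the conclusion hold for every admissible choice of $\Ascr_k$.
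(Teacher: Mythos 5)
Your proof is correct and follows essentially the same route as the paper's: a $\delta/4$ perturbation argument showing that for $t$ large the exposed atoms of $M^*y_i^*$ are strictly separated above all other atoms in the ordering induced by $M^*y_i^{(t)}$, so every top-$k$ selection must contain them. If anything, your write-up is slightly more complete than the paper's, which stops at the inclusion $\Escr\As(M^*y_i^*)\subseteq\EssCone_{\Ascr,k}(M^*y_i^{(t)})$ and leaves the feasibility of $x^{(t)}$ and the termination claim implicit, whereas you spell out that $x_i^*$ is feasible for the reduced problem~\eqref{eq:primal_recover} and hence $f(b-Mx^{(t)})\le\alpha$.
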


\Cref{prop:polyhedral} ensures that the atom-identification property described by~\cref{thm:p0} is guaranteed to discard superfluous atoms in a finite number of iterations as long as we have available an iterative solver that generates dual iterates converging to a solution. Thus, \cref{alg:primal_recover} is guaranteed to generate a feasible solution to~\eqref{eq:main_prob}.
The following corollary characterizes a bound on $T$ in terms of the convergence rate of the dual method.

\begin{corollary} \label{coro:polyhedral}
    For each problem $i= 1, 2, 3$, suppose the dual oracle generates iterates $y^{(t)}$ converging to optimal variable $y_i^*$ with rate
    \[d_i(y_i^{(t)}) - d_i(y_i^*) \in \Oscr\left( t^{-\alpha} \right)\]
    for some $\alpha > 0$. If the atomic set $\Ascr$ is polyhedral and the problem pair (\Probi, \Drobi) is $\delta$-nondegenerate, then \cref{alg:primal_recover} with $\epsilon=0$ terminates in $\Oscr\left(\delta^{-2/\alpha}\right)$ iterations. 
\end{corollary}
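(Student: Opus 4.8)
The plan is to make the finite-time conclusion of \cref{prop:polyhedral} quantitative. Fix a problem index $i \in \{1,2,3\}$ and write $g_t := d_i(y_i^{(t)}) - d_i(y_i^*)$, $z := M^*y_i^{(t)}$, and $z^* := M^*y_i^*$. I will show (a) that once $g_t$ drops below a threshold of order $\delta^2$, every admissible top-$k$ set $\Ascr_k$ from \eqref{eq:top-k} built from $z$ contains the optimal atomic support $\suppa(x_i^*)$; and (b) that this inclusion forces \cref{alg:primal_recover} (run with $\epsilon = 0$) to terminate at the next feasibility check. Combining (a) and (b) with the assumed rate $g_t \in \Oscr(t^{-\alpha})$ then yields the bound $\Oscr(\delta^{-2/\alpha})$ on the number of iterations.

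For (b): if $\suppa(x_i^*) \subseteq \Ascr_k$, then $x_i^* \in \cone(\suppa(x_i^*)) \subseteq \cone\Ascr_k = \EssCone_{\Ascr,k}(z)$, and by \cref{ass:blanket} we have $|\suppa(x_i^*)| \le k$ and $f(b - Mx_i^*) \le \alpha$; hence $x_i^*$ is feasible for the reduced problem \eqref{eq-top-k-recover}, whose optimal value is therefore at most $\alpha$. The returned $x^{(t)}$ is then a conic combination of at most $k$ atoms with $f(b - Mx^{(t)}) \le \alpha$, so $\card_\Ascr(x^{(t)}) \le k$ and $x^{(t)}$ is feasible for \eqref{eq:main_prob}; Step~6 of \cref{alg:primal_recover} succeeds with $\epsilon = 0$ and the loop exits.

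For (a) I would combine three estimates. First, \cref{thm:p0} gives $\ip{a}{z} \ge \sigma\As(z) - \epsilon_i$ for every $a \in \suppa(x_i^*)$, where $\epsilon_i = c_i\opa{M}\sqrt{2Lg_t}$ with $c_i$ a constant (for $i = 3$ absorbing the factor $\sqrt{\bar\beta}$ and with $g_t$ read as the gap appearing in \cref{thm:p3}). Second, $L$-smoothness of $f$ makes $f^*$---and hence the dual objectives $d_1, d_2$---$1/L$-strongly convex, so $\|y_i^{(t)} - y_i^*\| \le \sqrt{2Lg_t}$, which in turn gives $|\ip{a}{z - z^*}| \le \opa{M}\|y_i^{(t)} - y_i^*\|$ for every $a \in \Ascr$ and $\sigma\As(z^*) \le \sigma\As(z) + \opa{M}\|y_i^{(t)} - y_i^*\|$ by sublinearity of $\sigma\As$. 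Third, \cref{thm:opt_supp_id} implies every $a \in \suppa(x_i^*)$ is exposed by $z^*$, while $\delta$-nondegeneracy gives $\ip{a'}{z^*} \le \sigma\As(z^*) - \delta$ for every $a' \in \Ascr \setminus \suppa(x_i^*)$. Putting these together, for $a \in \suppa(x_i^*)$ and $a' \in \Ascr \setminus \suppa(x_i^*)$,
\[
  \ip{a}{z} - \ip{a'}{z} \;\geq\; \bigl(\sigma\As(z) - \epsilon_i\bigr) - \bigl(\sigma\As(z) - \delta + 2\opa{M}\|y_i^{(t)} - y_i^*\|\bigr) \;\geq\; \delta - (c_i + 2)\,\opa{M}\sqrt{2Lg_t}.
\]
Setting $\theta_i := \delta^2/\bigl(2L(c_i+2)^2\opa{M}^2\bigr)$, this is strictly positive whenever $g_t < \theta_i$. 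A counting argument then finishes (a): if some $a \in \suppa(x_i^*)$ were outside an admissible top-$k$ set $\Ascr_k$, then, since $|\suppa(x_i^*)| \le k = |\Ascr_k|$, some $a' \in \Ascr_k \setminus \suppa(x_i^*)$ would satisfy $\ip{a'}{z} \ge \ip{a}{z}$, contradicting the displayed strict inequality; hence $\suppa(x_i^*) \subseteq \Ascr_k$. Finally, the rate hypothesis yields $g_t \le C_0 t^{-\alpha}$ for some constant $C_0$ and all large $t$, so $g_t < \theta_i$ whenever $t > (C_0/\theta_i)^{1/\alpha} = \Oscr(\delta^{-2/\alpha})$, which is the claimed bound on $T$.

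I expect the main obstacle to be the second estimate for $i = 3$: the dual objective of \eqref{prob:dual3} is a perspective map and is not strongly convex, so $\|y_i^{(t)} - y_i^*\| \lesssim \sqrt{g_t}$ is not automatic. I would resolve this as in the proof of \cref{thm:p3}, restricting $\beta$ to a compact interval $[\underline\beta, \bar\beta]$ containing $\beta^*$ (such bounds being available as discussed after \cref{thm:p0}), on which the objective is strongly convex in $y$ with modulus depending on $L$ and $\bar\beta$---which is also where the extra $\sqrt{\bar\beta}$ in $\epsilon_3$ originates. A minor secondary point is that ties are permitted in \eqref{eq:top-k}: the displayed inequality gives \emph{strict} domination of support atoms over non-support atoms at $z$, so \emph{every} admissible top-$k$ selection contains $\suppa(x_i^*)$, and the argument is independent of the tie-breaking rule.
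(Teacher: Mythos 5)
Your proposal is correct and follows essentially the route the paper intends: the paper gives no separate proof of \cref{coro:polyhedral}, which is meant to follow by combining the $\delta/(4\opa{M})$-threshold separation argument in the proof of \cref{prop:polyhedral} with the strong-convexity bound $\|y-y^*\|\le\sqrt{2Lg_t}$ from the proof of \cref{thm:p0} (and the $[\underline\beta,\bar\beta]$ workaround for $i=3$), then inverting the rate to get $g_t\lesssim\delta^2$ at $t=\Oscr(\delta^{-2/\alpha})$. Your rederivation of the separation via the $\epsilon_i$-exposed set plus nondegeneracy only changes the constant, and your counting argument handling ties in the top-$k$ selection is a sound (and slightly more careful) version of the paper's containment $\Escr\As(M^*y_i^*)\subseteq\EssCone_{\Ascr,k}(M^*y_i^{(t)})$.
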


\subsubsection{Centrosymmetry and unconstrained primal recovery}

Further computational savings are possible when the atomic set $\Ascr$ is centrosymmetric, i.e.,
\begin{equation}\label{eq-centro-symmetric}
  a\in\Ascr \iff -a\in\Ascr.
\end{equation}
Centrosymmetry is a common property, and perhaps the prototypical example is the set of signed canonical unit vectors given by the set~\eqref{eq-signed-e}. Whenever centrosymmetry holds, $\cone\Ascr=\Span\Ascr$. This motivates us to replace the function $\EssCone$ with the function
\[
  \EssSpan_{\Ascr, k}(M^*y) \coloneqq \Span\Ascr_k,
\]
where $\Ascr_k$ is the collection of top-$k$ atoms defined by \cref{eq:top-k}. Thus, the primal-retrieval optimization problem~\eqref{eq-top-k-recover} reduces to the unconstrained version
\begin{equation*}
  \hat c \in \argmin{c \in \Re^k} f_k(c).
\end{equation*}

The following corollary simply asserts that the complexity results described in \cref{sec-iteration-complexity} continue to hold for centrosymmetric atomic sets when using the essential span function.
\begin{corollary}[Atom identification under centrosymmetry]
    If the atomic set $\Ascr$ is centrosymmetric and polyhedral, then \cref{prop:polyhedral} and \cref{coro:polyhedral} hold with $\EssCone$ replaced by $\EssSpan$. 
\end{corollary}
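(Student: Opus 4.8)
The plan is to observe that replacing $\EssCone_{\Ascr,k}$ by $\EssSpan_{\Ascr,k}$ changes only the hull taken over the top-$k$ atoms $\Ascr_k$ of~\eqref{eq:top-k}, not the selection of $\Ascr_k$ itself, and that this enlargement breaks neither the cardinality guarantee nor the feasibility certificate used in \cref{prop:polyhedral,coro:polyhedral}. I would establish three facts, in order: (i) for centrosymmetric $\Ascr$ one still has $\EssSpan_{\Ascr,k}(M^*y)\subseteq\set{x\mid\card_\Ascr(x)\le k}$; (ii) since~\eqref{eq:top-k} depends only on the inner products $\ip{M^*y}{a}$, the set $\Ascr_k$ is the same object whether $\cone\Ascr_k$ or $\Span\Ascr_k$ is formed from it, so $\EssSpan_{\Ascr,k}(M^*y^{(t)})\supseteq\cone\Ascr_k=\EssCone_{\Ascr,k}(M^*y^{(t)})$, and hence \cref{prop:polyhedral} already yields $\EssSpan_{\Ascr,k}(M^*y_i^{(t)})\supseteq\Escr\As(M^*y_i^*)$ for all $t>T$ with the same $T$; and (iii) the reduced problem~\eqref{eq-top-k-recover} minimized over $\Span\Ascr_k$ has optimal loss no larger than over $\cone\Ascr_k$.

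For~(i) --- the only place where centrosymmetry is genuinely used --- I would fix $x\in\Span\Ascr_k$ with $\Ascr_k=\{a_1,\dots,a_k\}$, write $x=\sum_{i=1}^k c_ia_i$ with real coefficients $c_i$, and rewrite $x=\sum_{i:\,c_i\ne0}|c_i|\bigl(\sign(c_i)a_i\bigr)$. By~\eqref{eq-centro-symmetric} each $\sign(c_i)a_i$ lies in $\Ascr$, so this displays $x$ as a nonnegative combination of at most $k$ atoms of $\Ascr$, whence $\card_\Ascr(x)\le k$. This is the step I would write out carefully, since without centrosymmetry $\Span\Ascr_k$ can contain points of arbitrarily large atomic cardinality and the statement would fail; it is also the only genuine obstacle, the remainder being a rerun of the earlier arguments.

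For~(ii) and~(iii) I would replay the proofs of \cref{prop:polyhedral,coro:polyhedral} essentially verbatim: those proofs invoke only the ordering of $\ip{M^*y_i^{(t)}}{a}$ together with $\delta$-nondegeneracy to pin down $T$ (and, in \cref{coro:polyhedral}, the rate $d_i(y_i^{(t)})-d_i(y_i^*)\in\Oscr(t^{-\alpha})$ to obtain $T=\Oscr(\delta^{-2/\alpha})$), followed by the termination test $f_k(c^{(t)})\le\alpha$. For $t>T$, the definition of a support set, \cref{thm:opt_supp_id}, and \cref{prop:polyhedral} give $x_i^*\in\cone(\suppa(x_i^*))\subseteq\cone(\Escr\As(M^*y_i^*))\subseteq\cone\Ascr_k\subseteq\Span\Ascr_k$, so $\min_{x\in\Span\Ascr_k}f(b-Mx)\le f(b-Mx_i^*)\le\alpha$ by \cref{ass:blanket}; combined with~(i), the returned $x^{(t)}$ is feasible for~\eqref{eq:main_prob} and \cref{alg:primal_recover} terminates by iteration $T$ for every tolerance $\epsilon\ge0$. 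I would also remark that only the left-hand inclusion $\cone(\Escr\As(M^*y))\subseteq\EssSpan_{\Ascr,k}(M^*y)$ and facts~(i),~(iii) are used; the right-hand bound by $\cone(\Escr\As(M^*y,\epsilon))$ noted after \cref{thm:p0} plays no role in \cref{prop:polyhedral,coro:polyhedral}, so the fact that it is not centrosymmetry-invariant is harmless.
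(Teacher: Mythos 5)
Your proposal is correct: the paper offers no explicit proof of this corollary (it is presented as an immediate consequence of \cref{prop:polyhedral} and \cref{coro:polyhedral}), and your argument is precisely the intended one made explicit, with the sign-absorption step $x=\sum_i|c_i|\bigl(\sign(c_i)a_i\bigr)$ correctly isolated as the only place centrosymmetry is needed to preserve the bound $\card_\Ascr(x)\le k$ over $\Span\Ascr_k$. The remaining observations --- that $\Ascr_k$ is unchanged, that $\Span\Ascr_k\supseteq\cone\Ascr_k$ only enlarges the feasible set of the reduced problem, and that the feasibility certificate for $t>T$ carries over --- are exactly the reruns of the earlier proofs that the paper has in mind.
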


\subsection{Primal-retrieval for spectral atomic sets} \label{sec:spectral}

We formalize in this section a definition for the function $\EssCone_{\Ascr, k}$ for the case in which $\Ascr$ is a collection of rank-1 matrices, either asymmetric or symmetric, respectively:
\begin{align} \label{eq:asymmetric}
    \Ascr &= \set{uv^T \mid u \in \mR^m,\ v \in \mR^n,\ \|u\|_2=\|v\|_2 = 1}, \\ \label{eq:symmetric}
    \Ascr &= \set{vv^T \mid v \in \mR^n,\ \|v\|_2 = 1}.
\end{align}
We mainly focus on the former atomic set of asymmetric matrices because all of our theoretical results easily specialize to the symmetric case. Note that this atomic set is centrosymmetric (cf. \cref{eq-centro-symmetric}), and as we'll see below, the recovery problem is unconstrained. Later in \cref{sec:symmetric-matrices} we'll describe the recovery problem for the atomic set of symmetric matrices, which is in fact a non-centrosymmetric set.

For this section only, we work with the linear operator $M:\Re^{m\times n}\to\Re^{p\times q}$, and replace the vector of observations $b$ with the $p$-by-$q$ matrix $B$. In this context, the dual variables for one of the corresponding dual problems is a matrix of the same dimension.

Fix a feasible dual variable $Y$ and define the singular value decomposition (SVD) for its product with the adjoint of $M$ by
\begin{equation}\label{eq-svd}
   M^*(Y) = 
\begin{bmatrix}
    U_k & U_{-k}
\end{bmatrix}
\begin{bmatrix}
    \Sigma_k &          \\ 
             &  \Sigma_{-k}
\end{bmatrix}
\begin{bmatrix}
    V_k^T \\ 
    V_{-k}^T
\end{bmatrix},
\end{equation}
where $\Sigma_k$ is the diagonal matrix consisting of top-$k$ singular values of $M^*(Y)$, the matrices $U_k$ and $V_k$ contain the corresponding left and right singular vectors, and the matrices $U_{-k}$, $V_{-k}$, and $\Sigma_{-k}$ contain the remaining singular vectors and values. Then the reduced atomic set by $U_k$ and $V_k$ can be expressed as 
\[
  \Ascr_k = \set{ uv\T | u\in\range(U_k),\ v\in\range(V_k),\ \|u\|_2=\|v\|_2=1} \subset \Ascr.
\]
We construct the cone of essential atoms as the convex cone generated from the reduced atomic set $\Ascr_k$, i.e.,
\begin{equation} \label{eq:cone_spectral}
    \EssCone_{\Ascr, k}(M^*(Y)) \coloneqq \cone(\Ascr_k) = \set{U_k C V_k^T | C \in \Re^{k\times k}}.
\end{equation}
Thus, the primal-retrieval computation in Step~5 of \cref{alg:primal_recover} is then given by 
\begin{equation}\label{eq:unsymm-recovered-soln} 
  \hat X = U_k\hat C V_k^T
\end{equation}
where
\begin{equation}\label{eq-spectral-reduced}
  \hat C \in \argmin{C\in\Re^{k\times k}} f_k(C), \text{with} f_k(C) \coloneqq f\left( B - M(U_k C V_k^T) \right),
\end{equation}
is a $k\times k$ matrix obtained by solving the reduced problem~\eqref{eq:primal_recover}, which is defined over the cone generated by the essential atoms identified through the top-$k$ singular triples of $M^*(Y)$, desribed by~\eqref{eq:cone_spectral}.

\begin{example}[Low-rank matrix completion] \label{ex:mc}
    The low-rank matrix completion (LRMC) problem aims to recover a low-rank matrix from partial observations, which arises in many real applications such as recommender systems~\cite{rennie2005fast} and in a convex formulation of the phase retrieval problem~\cite{candes2013phaselift}. The LRMC problem can be expressed as 
    \begin{equation} \label{eq:lrmc}
      \find X\in\Re^{m\times n} \sut \sum_{(i,j) \in \Omega}\tfrac{1}{2}\left(X_{i,j} - B_{i,j}\right)^2 \leq \alpha \tand \rank(X) \leq k,
    \end{equation}
    where $\{B_{i,j} \mid (i,j) \in \Omega\}$ is the set of observations over the index set $\Omega$. Problem~\eqref{eq:lrmc} corresponds to~\eqref{eq:main_prob} with the objective $f=\half\|\cdot\|_2^2$, the atomic set $\Ascr$ given by~\eqref{eq:asymmetric}, and the linear operator $M$ defined by the mask 
    \[ M(X)_{i,j} = 
    \begin{cases}
        X_{i,j} &(i,j) \in \Omega, \\
        0 &\mathrm{otherwise}.
    \end{cases}
    \]
    Fazel~\cite{fazel1998approximations} popularized the convex relaxation of~\eqref{eq:lrmc} that minimizes the sum of singular values of $X$:
    \begin{equation} \label{eq:mc}
    \minimize{X\in\Re^{m\times n}}\enspace \|X\|_*  \enspace\text{subject to}\enspace \sum_{(i,j) \in \Omega}\tfrac{1}{2}\left(X_{i,j} - B_{i,j}\right)^2 \leq \alpha.
    \end{equation}
    This problem corresponds to formulation~\eqref{prob:primal3}. As with \cref{ex:bpdn}, there are many dual methods that can generate dual feasible iterates $Y^{(t)}$ converging to the dual solution of~\eqref{eq:mc}, such as a dual bundle method~\cite{fan2019bundle}. The resulting primal-retrieval strategy for Step~5 of \cref{alg:primal_recover} can be implemented by executing the following steps:
    \begin{enumerate}
        \item (Top-$k$ atoms) Compute the leading $k$ singular vectors of the matrix $M^*(Y^{(t)})$, given by $U_k^{(t)} \in \Re^{m \times k}$ and $V_k^{(t)} \in \Re^{n \times k}$ as defined by the SVD \cref{eq-svd}.
        \item (Retrieve coefficients) Solve the reduced problem~\eqref{eq-spectral-reduced}, where in this case,
        \begin{equation*}
            C^{(t)} \in \argmin{C\in\Re^{k\times k}} f_k(C),
            \text{with}
            f_k(C) \coloneqq \sum_{(i,j) \in \Omega}  \tfrac{1}{2}\left([U_k^{(t)}C(V_k^{(t)})^\intercal]_{i,j} - B_{i,j}\right)^2. 
        \end{equation*}
        This least squares problem can be solved to within $\epsilon$-accuracy in $\BigOh(( k |\Omega| + (m+n)k + k^3 )\epsilon^{-0.5})$ iterations, for example, with the FISTA algorithm~\cite{beck2009fast}. Typically, $k \ll \min\{m,n\}$, and so we expect that this reduced problem is significantly cheaper to solve than the original problem~\eqref{eq:mc}.
        \item (Termination) Step~6 of \cref{alg:primal_recover} terminates when the value of the reduced objective satisifes the condition $f_k(C^{(t)}) \leq \alpha + \epsilon$, where $\epsilon$ is some pre-defined tolerance. In that case, the algorithm returns with the primal estimate constructed from the left and right singular vectors:
        \[X^{(t)} = U_k^{(t)}C^{(t)}(V_k^{(t)})^\intercal.\]
    \end{enumerate}
    We describe numerical experiments for the low-rank matrix completion problem in \cref{sec:lrmc}. 
\end{example}

\subsubsection{Iteration complexity}

In the polyhedral case, we were able to assert through \cref{prop:polyhedral} that the optimal primal variable's atomic support could be identified in finite time. As we show here, however, finite-time identification is not possible for the spectral case. The following counterexample shows that the partial SVD of $M^*(Y)$, which we used in~\eqref{eq-svd}, is not able to give us a safe cover of the essential atoms in $\Escr\As(M^*(Y^*))$ even when this set is a singleton and $Y$ arbitrarily close to a dual solution $Y^*$. 

\begin{example}[Limitation of Partial SVD]\label{ex:partial-svd}
	Consider the problem
\begin{align} \label{eq:example_P}
  \minimize{ X \in \mR^{n \times n} }\enspace \half \| X - B \|_F^2 \enspace\text{subject to}\enspace \| X \|_* \leq 1,
\end{align} 
where
\[
  B = U \Diag( 2, 0.1, \ldots, 0.1 ) V^T \text{and} U = V = \begin{bmatrix}
    \sqrt{1 - \epsilon} &~ 0 &  ~\ldots  & -\sqrt{ \epsilon } \\
    0 & ~1  &  ~\ldots &  \\
    \vdots &  &~\ddots & \\
    \sqrt{ \epsilon } & 0 & & \sqrt{1 - \epsilon}
    \end{bmatrix}_{n \times n}
\]
for some fixed $\epsilon \in (0,1)$. The dual problem is
\begin{align}
  \minimize{Y \in \mR^{n \times n} }\enspace \half \| Y - B \|_F^2 - \half \| B \|_F^2 + \|Y\|_2. \label{eq:example_D}
\end{align}
The solutions for the dual pair~\eqref{eq:example_P} and~\eqref{eq:example_D} are
\[
  X^* =  U \Diag( 1, 0, \ldots, 0 ) V^T \text{and} Y^* = B - X^* = U \Diag( 1, 0.1, \ldots, 0.1 ) V^T.
\]

In this pair of problems, the linear operator $M$ is simply the identity map, and the cone of essential atoms described by~\eqref{eq:cone_spectral} depends only on the dual variable $Y$.  Let $u_1$ and $v_1$, respectively, be the first columns of $U$ and $V$. Evidently, the support of $X^*$ coincides with the essential atoms of $Y^*$, and moreover, the support is a unique singleton. In other words,
\[
  \suppa(X^*) = \Escr\As(Y^*) = \{u_1 v_1^T\}.
\]
We construct the following dual feasible solution
\[
  \widehat{Y} = \Diag(1, 0.1, \ldots, 0.1).
\]
Because $\widehat Y$ is diagonal, its left and right singular vectors $\widehat{U}$ and $\widehat{V}$ are given by $\widehat{U} = \widehat{V} = I = [e_1, e_2, \ldots, e_n]$. Note also that the top singular vector $u_1 = [\sqrt{1 - \epsilon}, 0, \ldots, \sqrt{\epsilon}]^T$ lies in the span of the basis vectors $e_1$ and $e_n$ that constitute the top and bottom singular vectors of $\widehat{Y}$. Therefore, any top-$r$ SVD of $\widehat{Y}$, with $r < n$, cannot be used to recover exactly a primal solution $X^*$. Moreover, $\| \widehat{Y} - Y^* \|_F = \BigOh( \sqrt{\epsilon} )$ for any $\epsilon\in(0,1)$, which in effect implies that it's impossible to recover exactly the true solution even with an arbitrarily accurate dual approximation $\widehat{Y}$.
\end{example}


This last example motivates our study of the quality with which a partial SVD of a given feasible dual solution $M^*(Y)$ can be used to approximate the support $\Escr\As(M^*(Y^*))$. The next result measures the difference between $\suppa(x^*)$ and $\EssCone_{\Ascr, k}(M^*(Y))$ using one-sided Hausdorff distance.

\begin{proposition}[Error in truncated SVD] \label{thm:svd_approx_score}
  Let $Y$ be feasible for one of the dual problems~\Drobi\ for $i=1,2,3$.
Let $\{\sigma_j\}_{j=1}^{\min\{n,m\}}$ be the singular values satisfying 
$\sigma_1 \geq \dots \geq \sigma_{\min\{n,m\}}$, where we assume $\sigma_1 > \sigma_{k+1}$. Let $Z:=M^*(Y)$ and let $\EssCone_{\Ascr, k}(Z)$ denote the cone generated according to equation~\eqref{eq:cone_spectral}. Then 
  \[
    \dist( \suppa(X^*),\  \EssCone_{\Ascr, k}(Z)) \leq
    \dist( \Escr\As( Z, \epsilon_i ),\ \EssCone_{\Ascr, k}(Z)  ) \leq \sqrt{ 2 \min\left\{ \frac{\epsilon_i}{ \sigma_1 - \sigma_{k+1} }, 1 \right\} },
  \]
  where each $\epsilon_i$ is defined in \cref{thm:p0} and the function 
  \[  
  \dist( \Ascr_1, \Ascr_2 ) \coloneqq \adjustlimits\sup_{a_1 \in \Ascr_1} \inf_{a_2 \in \Ascr_2 } \| a_1 - a_2 \|_F.
    \]
  is the one-sided Hausdorff distance between sets $\Ascr_1$ and $\Ascr_2$.
\end{proposition}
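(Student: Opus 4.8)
The plan is to prove the two inequalities in turn. The first, $\dist(\suppa(X^*),\EssCone_{\Ascr,k}(Z))\le\dist(\Escr\As(Z,\epsilon_i),\EssCone_{\Ascr,k}(Z))$, is a direct consequence of \cref{thm:p0}: since $Y$ is dual-feasible for \Drobi\ and $X^*$ is optimal for \Probi, that theorem yields $\suppa(X^*)\subseteq\Escr\As(M^*(Y),\epsilon_i)=\Escr\As(Z,\epsilon_i)$, and the one-sided Hausdorff distance $\dist(\,\cdot\,,\EssCone_{\Ascr,k}(Z))$, being a supremum over its first argument, is monotone under this inclusion.

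For the second inequality I would work directly with the geometry of the rank-one atomic set. Recall that for~\eqref{eq:asymmetric} the support function is $\sigma\As(Z)=\|Z\|_2=\sigma_1$, so an atom $uv\T$ (with $\|u\|_2=\|v\|_2=1$) belongs to $\Escr\As(Z,\epsilon_i)$ exactly when $u\T Zv\ge\sigma_1-\epsilon_i$. For each such atom I would exhibit a nearby element of $\EssCone_{\Ascr,k}(Z)=\{U_kCV_k\T\mid C\in\Re^{k\times k}\}$, namely the Frobenius-orthogonal projection of $uv\T$ onto this subspace, which equals $u_kv_k\T$ with $u_k:=U_kU_k\T u$ and $v_k:=V_kV_k\T v$. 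The Pythagorean identity then gives $\|uv\T-u_kv_k\T\|_F^2=1-\|u_k\|_2^2\|v_k\|_2^2$, so the whole estimate reduces to the lower bound $s:=\|u_k\|_2\|v_k\|_2\ge 1-\epsilon_i/(\sigma_1-\sigma_{k+1})$. To obtain it, split $u=u_k+u_\perp$ and $v=v_k+v_\perp$ along $\range(U_k)\oplus\range(U_{-k})$ and $\range(V_k)\oplus\range(V_{-k})$ as in the SVD~\eqref{eq-svd}; the block structure kills the cross terms, leaving $u\T Zv=u_k\T Zv_k+u_\perp\T Zv_\perp\le\sigma_1 s+\sigma_{k+1}\|u_\perp\|_2\|v_\perp\|_2$. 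Since $\|u_k\|_2^2+\|u_\perp\|_2^2=\|v_k\|_2^2+\|v_\perp\|_2^2=1$, Cauchy--Schwarz gives $\|u_\perp\|_2\|v_\perp\|_2\le 1-s$, and plugging $u\T Zv\ge\sigma_1-\epsilon_i$ in and rearranging yields $s\ge 1-\epsilon_i/(\sigma_1-\sigma_{k+1})$. Consequently $\|uv\T-u_kv_k\T\|_F^2=1-s^2\le 2\epsilon_i/(\sigma_1-\sigma_{k+1})$, and since also $1-s^2\le 1$ trivially, the squared distance is at most $2\min\{\epsilon_i/(\sigma_1-\sigma_{k+1}),1\}$; taking the supremum over $uv\T\in\Escr\As(Z,\epsilon_i)$ gives the claimed bound.

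The main point requiring care is the vanishing of the cross terms $u_k\T Zv_\perp$ and $u_\perp\T Zv_k$: this uses $Zv_k\in\range(U_k)$ and $Zv_\perp\in\range(U_{-k})$, i.e., the matched left/right singular subspaces recorded in~\eqref{eq-svd}, together with the fact that the largest diagonal entry of $\Sigma_{-k}$ is $\sigma_{k+1}$ (so that $u_\perp\T Zv_\perp\le\sigma_{k+1}\|u_\perp\|_2\|v_\perp\|_2$); everything else is Cauchy--Schwarz and the Pythagorean theorem. The hypothesis $\sigma_1>\sigma_{k+1}$ enters only to keep the denominator positive, and combining the $\epsilon_i$-dependent estimate with the trivial bound $1-s^2\le1$ is what produces the $\min\{\cdot,1\}$ in the final constant. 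Finally, the symmetric atomic set~\eqref{eq:symmetric} is handled by exactly the same argument with the eigendecomposition of $M^*(Y)$ playing the role of the SVD, so the result specializes without change.
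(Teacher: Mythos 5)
Your proof is correct, and both inequalities are established by sound arguments: the first via \cref{thm:p0} plus monotonicity of the one-sided Hausdorff distance exactly as in the paper, and the second via a clean projection argument. The paper takes a slightly different route for the second inequality: it writes the distance from a unit-norm exposed atom $uv\T$ to a unit-norm element $U_k\hat p\hat q\T V_k\T$ as $\sqrt{2-2\ip{uv\T}{U_k\hat p\hat q\T V_k\T}}$, observes that the best inner product equals $s=\|U_k\T u\|_2\|V_k\T v\|_2$, and then bounds $s$ from below by explicitly solving a reduced two-variable optimization problem (minimizing $s$ subject to the exposure constraint $\sum_j\sigma_jp_jq_j\ge\sigma_1-\epsilon_i$), obtaining the optimal value $\max\{1-\epsilon_i/(\sigma_1-\sigma_{k+1}),0\}$. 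You instead project $uv\T$ orthogonally onto the subspace $\{U_kCV_k\T\}$ and derive the same lower bound on $s$ directly from the block-diagonal splitting of $Z$ together with Cauchy--Schwarz ($\|u_\perp\|_2\|v_\perp\|_2\le1-s$). The two arguments hinge on the identical quantity and spectral-gap mechanism, but yours avoids solving the subproblem and, because you compare against the projection $u_kv_k\T$ (which need not have unit norm but does lie in the cone) rather than a unit-norm atom, you actually obtain the marginally sharper intermediate estimate $\sqrt{1-s^2}$ in place of the paper's $\sqrt{2-2s}$; both are dominated by the stated bound $\sqrt{2\min\{\epsilon_i/(\sigma_1-\sigma_{k+1}),1\}}$.
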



Oustry~\cite[Theorem~2.11]{Oustry00} developed a related result based on the two-sided Hausdorff distance. Directly applying Oustry's result to our context results in a bound on the order $ \mathcal{O}( \sqrt{ \epsilon/( \sigma_k - \sigma_{k+1} ) } ) $, which is looser than the bound shown in \cref{thm:svd_approx_score} because $\sigma_1 \geq \sigma_{k} \geq \sigma_{k+1}$. 



Finally, we show the error bound for primary recovery. 
\begin{proposition} \label{prop:spectral}
    Assume that $f \geq 0$ and $f(0) = 0$. Let $X^*$ and $Y$, respectively, be primal optimal and dual feasible for one of the primal-dual pairs \Probi\ and \Drobi, for $i=1,2,3$.
Let $\{\sigma_j\}_{j=1}^{\min\{n,m\}}$ be the singular values satisfying 
$\sigma_1 \geq \dots \geq \sigma_{\min\{n,m\}}$, where we assume $\sigma_1 > \sigma_{k+1}$.  Let $\EssCone_{\Ascr, k}(M^*(Y))$ denote the cone generated according to equation~\eqref{eq:cone_spectral}. Let $\hat X$ be the solution recovered via~\eqref{eq:unsymm-recovered-soln}.
Then
\[
    f( B - M (\hat X )) \le f( B - M (X^*) ) + \BigOh\left( \sqrt{ \frac{\epsilon_i}{ \sigma_1 - \sigma_{k+1} } } \right),
\]
where each $\epsilon_i$ is defined in \cref{thm:p0}.
\end{proposition}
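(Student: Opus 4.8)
\emph{Proof plan.} The plan is to upper-bound $f(B-M\hat X)$ by exhibiting a matrix $\tilde X$ in the essential cone $\EssCone_{\Ascr, k}(M^*(Y))$ that is close to $X^*$, and then to transfer this proximity to objective values through the smoothness of $f$. Because $\hat X=U_k\hat C V_k^T$ with $\hat C$ a minimizer of $f_k$ over $\Re^{k\times k}$, we have $f(B-M\hat X)=\min_{\tilde X\in\EssCone_{\Ascr, k}(M^*(Y))}f(B-M\tilde X)$ by~\eqref{eq:cone_spectral}, so it is enough to produce one $\tilde X\in\EssCone_{\Ascr, k}(M^*(Y))$ with $f(B-M\tilde X)$ suitably small. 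To build $\tilde X$, set $Z\coloneqq M^*(Y)$ and let $\tilde X$ be the orthogonal projection of $X^*$ onto the linear subspace $\EssCone_{\Ascr, k}(Z)=\set{U_kCV_k^T\mid C\in\Re^{k\times k}}$. Writing $X^*=\sum_{a\in\suppa(X^*)}c_a a$ with $c_a>0$ and $\sum_a c_a=\gauge\As(X^*)$, linearity of the projection gives $\tilde X=\sum_a c_a\hat a$, where $\hat a\in\EssCone_{\Ascr, k}(Z)$ is the projection of the rank-one atom $a$; hence $\tilde X\in\EssCone_{\Ascr, k}(Z)$, and the triangle inequality together with \cref{thm:svd_approx_score} yields
\[ \|X^*-\tilde X\|_F \le \sum_a c_a\|a-\hat a\|_F \le \gauge\As(X^*)\,\dist(\suppa(X^*),\EssCone_{\Ascr, k}(Z)) \le \gauge\As(X^*)\sqrt{2\min\{\epsilon_i/(\sigma_1-\sigma_{k+1}),1\}}. \]
Denote this last bound by $\gauge\As(X^*)\,d$; note $d\le\sqrt2$ and $d=\BigOh(\sqrt{\epsilon_i/(\sigma_1-\sigma_{k+1})})$, with $\epsilon_i$ as in \cref{thm:p0}.

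For the transfer step, I would use the hypotheses $f\ge0$ and $f(0)=0$: with convexity they make $0$ a global minimizer of the differentiable function $f$, so $\nabla f(0)=0$, and then $L$-smoothness gives $\|\nabla f(w)\|_F\le L\|w\|_F$ for all $w$. Put $r^*\coloneqq B-MX^*$ and $R\coloneqq\|r^*\|_F+\sqrt2\,\|M\|\,\gauge\As(X^*)$, where $\|M\|$ is the operator norm. Since $\|B-M\tilde X\|_F\le\|r^*\|_F+\|M\|\,\gauge\As(X^*)\,d\le R$, every point of the segment from $r^*$ to $B-M\tilde X$ has norm at most $R$, so $f$ is $LR$-Lipschitz along it, and therefore
\[ f(B-M\hat X)\le f(B-M\tilde X)\le f(B-MX^*)+LR\,\|M(X^*-\tilde X)\|_F\le f(B-MX^*)+LR\,\|M\|\,\gauge\As(X^*)\,d. \]
Since $d=\BigOh(\sqrt{\epsilon_i/(\sigma_1-\sigma_{k+1})})$, this is the claimed estimate, the hidden constant depending only on $L$, $\|M\|$, $\|r^*\|_F$, and $\gauge\As(X^*)$.

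I expect the first step to be the main obstacle. \cref{thm:svd_approx_score} controls only a one-sided Hausdorff distance between the atomic support $\suppa(X^*)$ and the cone, i.e.\ it certifies that each \emph{individual} exposed atom appearing in $X^*$ lies near $\EssCone_{\Ascr, k}(Z)$, whereas $X^*$ is itself only a conic combination of such atoms and need not be near any single atom. Two structural facts make the passage work: $\EssCone_{\Ascr, k}(Z)$ is in fact a linear subspace, $\set{U_kCV_k^T}$, so the orthogonal projection onto it commutes with forming conic combinations; and the coefficients $c_a$ sum to $\gauge\As(X^*)$, which converts the per-atom estimate into a bound on $\|X^*-\tilde X\|_F$ inflated only by the factor $\gauge\As(X^*)$. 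The transfer step is then routine; its one delicate point, the first-order term in the descent inequality, is handled by $\|\nabla f(r^*)\|_F\le L\|r^*\|_F$ — precisely where $f\ge0$ and $f(0)=0$ enter — while the truncation $\min\{\cdot,1\}$ in \cref{thm:svd_approx_score} is what keeps $d$, and hence $R$ and the effective Lipschitz constant, bounded independently of $\epsilon_i$.
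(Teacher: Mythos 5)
Your proposal is correct and follows essentially the same route as the paper: construct a surrogate $\tilde X\in\EssCone_{\Ascr,k}(M^*(Y))$ by approximating each atom in the conic decomposition of $X^*$ (the paper's Lemma on the Hausdorff error bound does exactly this, picking nearest reduced atoms rather than orthogonal projections), bound $\|X^*-\tilde X\|_F$ via \cref{thm:svd_approx_score}, and transfer to objective values using $L$-smoothness together with $f(0)=0$ and $\nabla f(0)=0$. The only differences are cosmetic: you control $\sum_a c_a$ by $\gauge\As(X^*)$ where the paper uses $\sqrt{|\suppa(X^*)|}\,\|X^*\|_F$, and you use a local Lipschitz bound $\|\nabla f(w)\|\le L\|w\|$ where the paper uses the descent lemma with $\|\nabla f(B-MX^*)\|\le\sqrt{2L\alpha}$ --- both yield the same $\BigOh\bigl(\sqrt{\epsilon_i/(\sigma_1-\sigma_{k+1})}\bigr)$ estimate.
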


\Cref{prop:spectral} characterizes the error bound for our primal-retrieval strategy when $\Ascr$ is spectral. Our next corollary shows that \cref{alg:primal_recover} can terminate in polynomial time with any tolerance $\epsilon > 0$.  

\begin{corollary} \label{coro:spectral}
    For one of the problems \Drobi, $i=1,2,3$, suppose that a dual oracle generates iterates $Y^{(t)}$ converging to optimal variable $Y^*$ with convergence rate
    \[d_i(Y^{(t)}) - d_i(Y^*) \in \Oscr\left( t^{-\alpha} \right)\]
    for some $\alpha > 0$. If the atomic set $\Ascr$ is spectral then \cref{alg:primal_recover} with $\epsilon>0$ will terminate in $\Oscr\left(\epsilon^{-4/\alpha}\right)$ iterations. 
\end{corollary}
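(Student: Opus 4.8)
The plan is to chain three facts---the assumed convergence rate of the dual oracle, the error bound $\epsilon_i$ of \cref{thm:p0}, and the primal-retrieval error bound of \cref{prop:spectral}---and then read off how many iterations are needed for the termination test in Step~6 of \cref{alg:primal_recover} to pass.

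First I would convert the dual rate into a rate on $\epsilon_i^{(t)} := \epsilon_i(Y^{(t)})$. For $i=1,2$, \cref{thm:p0} writes $\epsilon_i^{(t)}$ as a fixed multiple of $\sqrt{d_i(Y^{(t)}) - d_i(Y^*)}$, so the hypothesis $d_i(Y^{(t)}) - d_i(Y^*) \in \Oscr(t^{-\alpha})$ gives $\epsilon_i^{(t)} \in \Oscr(t^{-\alpha/2})$. For $i=3$ the same rate holds once one also uses that the bounds $\underline{\beta},\bar{\beta}$ produced by the level-set method (\cref{app:bounds}) converge to $\beta^*$, so that $\max\{d_3(Y^{(t)},\underline{\beta}),\,d_3(Y^{(t)},\bar{\beta})\} - d_3(Y^*,\beta^*) \in \Oscr(t^{-\alpha})$ as well. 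In all three cases $\epsilon_i^{(t)} \to 0$.

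Next I would control the spectral-gap denominator appearing in \cref{prop:spectral}. Write $\sigma_j^{(t)}$ for the $j$-th singular value of $M^*(Y^{(t)})$ and $\Delta^* := \sigma_1(M^*(Y^*)) - \sigma_{k+1}(M^*(Y^*))$, which we take to be positive exactly as in the hypothesis of \cref{prop:spectral}. Since singular values are continuous in the matrix and $Y^{(t)} \to Y^*$, there is a fixed index past which $\sigma_1^{(t)} - \sigma_{k+1}^{(t)} \ge \Delta^*/2 > 0$. For such $t$, applying \cref{prop:spectral} at $Y = Y^{(t)}$ (with $\hat X = X^{(t)}$ the iterate from Step~5) and inserting $\epsilon_i^{(t)} \in \Oscr(t^{-\alpha/2})$ gives
\[
  f\bigl(B - M(X^{(t)})\bigr) \;\le\; f\bigl(B - M(X^*)\bigr) + \Oscr\!\Bigl(\sqrt{\epsilon_i^{(t)}/\Delta^*}\Bigr) \;=\; f\bigl(B - M(X^*)\bigr) + \Oscr\bigl(t^{-\alpha/4}\bigr),
\]
the constant $\Delta^*$ disappearing into the $\Oscr$.

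To close, \cref{ass:blanket} gives $f(B - M(X^*)) \le \alpha$, so $f(B - M(X^{(t)})) \le \alpha + C\,t^{-\alpha/4}$ for some fixed $C$; hence the test $f(B - M(X^{(t)})) \le \alpha + \epsilon$ is guaranteed once $C\,t^{-\alpha/4} \le \epsilon$, i.e.\ once $t \ge (C/\epsilon)^{4/\alpha} \in \Oscr(\epsilon^{-4/\alpha})$, which is the claimed count; the case split on $i$ then covers all three problems. The only genuinely non-bookkeeping step is the uniform lower bound on $\sigma_1^{(t)} - \sigma_{k+1}^{(t)}$: without it the denominator in \cref{prop:spectral} could degrade the rate, so the argument leans on the spectral-gap hypothesis at $Y^*$ together with continuity of the singular values under $Y^{(t)} \to Y^*$. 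A lesser subtlety is the $i=3$ case, where one must additionally invoke the convergence of the $\beta$-bounds from \cref{app:bounds} to propagate the rate to $\epsilon_3^{(t)}$.
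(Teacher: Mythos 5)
Your proposal is correct and follows exactly the chain the paper intends: the paper states \cref{coro:spectral} without an explicit proof, and the intended derivation is precisely your composition of the dual rate with the definition of $\epsilon_i$ in \cref{thm:p0} (giving $\epsilon_i^{(t)}\in\Oscr(t^{-\alpha/2})$) and the bound of \cref{prop:spectral} (giving an additive error $\Oscr(t^{-\alpha/4})$ over $f(B-M(X^*))\le\alpha$), whence termination in $\Oscr(\epsilon^{-4/\alpha})$ iterations. Your explicit handling of the uniform lower bound on $\sigma_1^{(t)}-\sigma_{k+1}^{(t)}$ via continuity of singular values, and of the convergence of the $\beta$-bounds for $i=3$, supplies details the paper's statement leaves implicit rather than deviating from its approach.
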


\subsubsection{Non-centrosymmetry and constrained primal recovery} \label{sec:symmetric-matrices}

We now consider the case in which the atomic set $\Ascr$ given by \eqref{eq:symmetric}, which is not centrosymmetric. As we show below, the corresponding primal recovery problem is constrained.

Fix a feasible dual variable $Y$ and define the eigenvalue decomposition for its product with the adjoint of $M$ by
\begin{equation*}
   M^*(Y) = 
\begin{bmatrix}
    V_k & V_{-k}
\end{bmatrix}
\begin{bmatrix}
    \Sigma_k &          \\ 
             &  \Sigma_{-k}
\end{bmatrix}
\begin{bmatrix}
    V_k\T \\ 
    V_{-k}\T
\end{bmatrix},
\end{equation*}
where $V_k$ and the diagonal matrix $\Sigma_k$, respectively, contain the top-$k$ eigenvectors and eigenvalues of $M^*(Y)$, and $V_{-k}$ and the diagonal matrix $\Sigma_{-k}$, respectively, contain the remaining eigenvectors and eigenvalues. Then the reduced atomic set by $V_k$ can be expressed as 
\[
  \Ascr_k = \set{ vv\T | v\in\range(V_k),\ \|v\|_2=1} \subset \Ascr.
\]
The convex cone of essential atoms generated from the reduced atomic set $\Ascr_k$ is given by
\begin{equation*}
    \EssCone_{\Ascr, k}(M^*(Y)) \coloneqq \cone(\Ascr_k) = \set{V_k C V_k\T | C \in \Re^{k\times k},\ C \succeq 0}.
\end{equation*}
The recovery problem \eqref{eq-spectral-reduced} then becomes constrained, i.e., 
\[
  \hat C \in \argmin{C\in\Re^{k\times k},\ C \succeq 0} f_k(C).
\]

\section{Numerical experiments}

We conduct several numerical experiments on both synthetic and real-world datasets to empirically verify the effectiveness of our proposed primal-retrieval strategy. In \cref{sec:bpdn}, we describe experiments on the basis pursuit denoising problem (\cref{ex:bpdn}), which shows the performance of our strategy on polyhedral atomic set. In \cref{sec:lrmc}, we apply our primal-retrieval technique to the low-rank matrix completion problem (\cref{ex:mc}) and test the effectiveness of our proposed method on the spectral atomic set. In \cref{sec:rpca}, we describe experiments on 
a image preprocessing problem, where the atomic set $\Ascr$ is the sum of a polyhedral atomic set and a spectral atomic set. This shows that our strategy can be applied to more complicated cases. For all experiments, we implement the level-set method proposed by Aravkin et al.~\cite{aravkin2016levelset} where we only store the dual variable $y$. We implement the level-set method and our primal-retrieval strategy in the Julia language~\cite{bezanson2017julia} and our code is publicly available at \url{https://github.com/MPF-Optimization-Laboratory/AtomicOpt.jl}. All the experiments are carried out on a Linux server with 8 CPUs and 64 GB memory. 

\subsection{Basis pursuit denoise} \label{sec:bpdn}

The experiments in this section include a selection of five relevant basis pursuit problems from the Sparco collection~\cite{BergFrieHennHerrSaabYilm:2008} of test problems. The chosen problems are all real-valued and suited to one-norm regularization. Each problem in the collection includes a linear operator $M:\Re^n \to \Re^m$ and a right-hand-side vector $b \in \Re^m$. \Cref{tab:sparco_info} summarizes the selected problems. We compare the results with SPGL1~\cite{berg2008probing}. In all problems, we set $\alpha = 10^{-3}\cdot\|b\|$. The results are shown in \Cref{tab:bpdn} where \texttt{nMat} denotes the total number of matrix-vector products with $M$ or $M^*$. 
As we can observe from \Cref{tab:bpdn}, the level-set algorithm equipped with our primal-retrieval technique can obtain an $\epsilon$-feasible solution within a small number of iterations, which is consistent with the finite-time identification property described by \Cref{prop:polyhedral}. We also observe that level-set method coupled with the primal-retrieval strategy can converge faster than SPGL1 with its default stopping criterion. This suggests that our primal-retrieval technique is both a memory-efficient method for obtaining approximal primal solutions with provable error bounds, and is also a practical technique that allow the optimization algorithm to stop early.

\begin{table}[t]
    \centering
    \begin{tabular}{llllll}
        \toprule
        Problem             &   ID &    $m$ &   $n$ &   $\|b\|$ &   $M$\\
        \midrule
        \texttt{blocksig}   &   2  &    1024&   1024&   7.9e+1  &   wavelet\\
        \texttt{cosspike}   &   3  &    1024&   2048&   1.0e+2  &   DCT\\
        \texttt{gcosspike}  &   5  &    300 &   2048&   8.1e+1  &   Gaussian ensemble + DCT\\
        \texttt{sgnspike}   &   7  &    600 &   2560&   2.2e+0  &   Gaussian ensemble\\
        \texttt{spiketrn}   &   903&    1024&   1024&   5.7e+1  &   1D convolution\\
        \bottomrule
    \end{tabular}
    \captionsetup{justification=centering}
    \caption{The Sparco test problems used.}
    \label{tab:sparco_info}
\end{table}

\begin{table}[t]
    \centering
    \begin{tabular}{lrrr}
        \toprule
        Problem             &   \texttt{nnz}(x) &    \texttt{nMat}(SPGL1) & \texttt{nMat}(level-set+PR)\\
        \midrule
        \texttt{blocksig}   &   71              &     22                  &   5\\
        \texttt{cosspike}   &   113             &     77                  &   71\\
        \texttt{gcosspike}  &   113             &    434                  &   141\\
        \texttt{sgnspike}   &   20              &    44                   &   21\\
        \texttt{spiketrn}   &   35              &    4761                 &   1888\\
        \bottomrule
    \end{tabular}
    \captionsetup{justification=centering}
    \caption{Basis pursuit denoise comparisons.}
    \label{tab:bpdn}
\end{table}

\subsection{Low-rank matrix completion} \label{sec:lrmc}

For this atomic set, we conduct an experiment similar to that carried out by Cand\'es and Plan~\cite{candes2010matrix}. We retrieved from the website of National Centers for Environmental Information\footnote{\url{https://www.ncei.noaa.gov}} a 6798-by-366 matrix $X$ whose entries are daily average temperatures at 6798 different weather stations throughout the world in year 2020. The temperature matrix $X$ is approximately low rank in the sense that $\|X - X_5\|_F / \|X\|_F \approx 24\%$, where $X_5$ is the matrix created by truncating the SVD after the top 5 singular values. 

To test the performance of our matrix completion algorithm, we subsampled 50\% of $X$ and then recovered an estimate $\hat X$.  The solution gives a relative error of $\|X - \hat X\|_F / \|X\|_F \approx 30\%$. The result is shown in \cref{fig:matrix_completion}.  As we can see from \Cref{fig:matrix_completion}, the recovery error exhibits a positive correlation with the duality gap, both the duality gap and the recovery gap decrease as the number of iteration increase. The observation in this experiment is consistent with our theory (\Cref{prop:spectral}).


\subsection{Robust principal component analysis} \label{sec:rpca}

In this section, we show that our primal-retrieval strategy can be applied to more complicated atomic sets besides polyhedral and spectral. We conduct the the similar experiment as in \cite{candes2011robust}. Face recognition algorithms are sensitive to shadows on faces, and therefore it's necessary to remove illumination variations and shadows on the face images. We obtained face images from the Yale B face database~\cite{georghiades2001few}. We show the original faces in \cref{fig:face}, where each face image was of size $192\times 168$ with 64 different lighting conditions. The images were then reshaped into a matrix $M \in \Re^{32256 \times 64}$. Because of the similarity between faces and the sparse structure of the shadow, the matrix $M$ can be approximately decomposed into two components, i.e., 
\[M \approx L + S,\]
where $L$ is a low-rank matrix corresponding to the clean faces and $S$ is sparse matrix corresponding to the shadows. Based on the work by Fan et al.~\cite{fan2020polar}, we know that such decomposition can be obtained via solving the following convex optimization problem:
\begin{equation} \label{eq:rpca}
    \min_{L, S} \enspace \max\{\|L\|_*,\ \lambda \|S\|_1\}  \enspace \st  \enspace  \|L + S - M\| \leq \alpha.
\end{equation}
By \cite[Proposition~7.3]{fan2019alignment}, we know that \eqref{eq:rpca} is equivalent to 
\begin{equation*}
    \min_{X} \enspace \gauge\As(X)  \enspace \st  \enspace  \|X - M\| \leq \alpha,
\end{equation*}
with $X=L+S$ and where $\Ascr = \lambda \Ascr_1 + \Ascr_2$, $\Ascr_1 = \set{uv\T \mid u \in \mR^m,\ v \in \mR^n,\ \|u\|_2=\|v\|_2 = 1}$ and $\Ascr_2 = \set{\pm e_ie_j\T \mid i \in [m], j \in [n]}$. The recovered low-rank component is shown in \cref{fig:face_shadow}. As we can see from the figure, most of the shadow are successfully removed. This experiment suggests that our primal-retrieval technique can potentially be used for more complex atomic set and allow the underlying the dual-algorithm to produce satisfactory result within a reasonable number of iterations.

\begin{figure}[t] \label{fig:rpca}
  \begin{subfigure}{.35\textwidth}
    \centering
    \includegraphics[width=\linewidth]{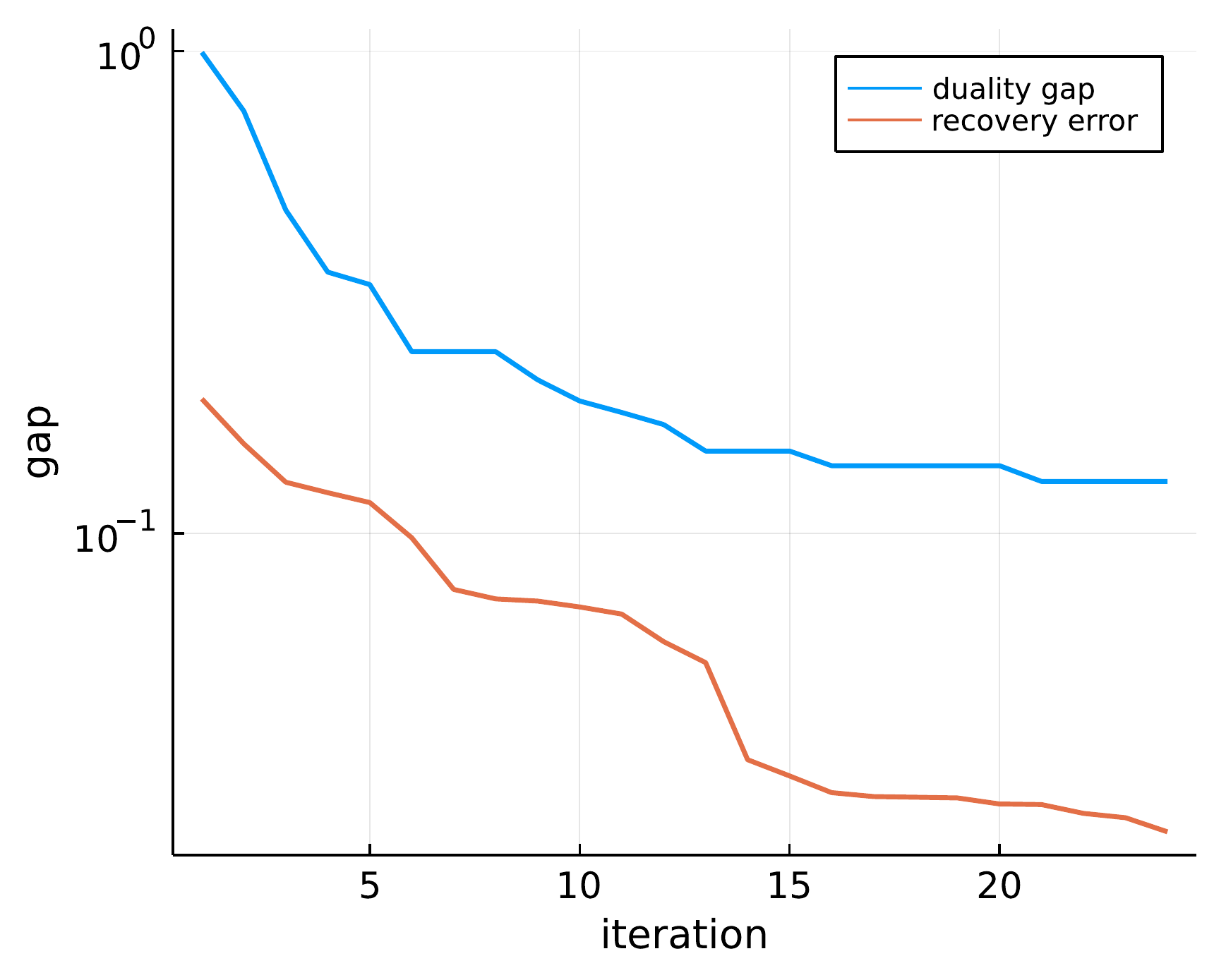}
    \captionsetup{justification=centering}
    \caption{Recovery error curve.}
    \label{fig:matrix_completion}
  \end{subfigure}
  \hfill
  \begin{subfigure}{.25\textwidth}
    \centering
    \includegraphics[width=\linewidth]{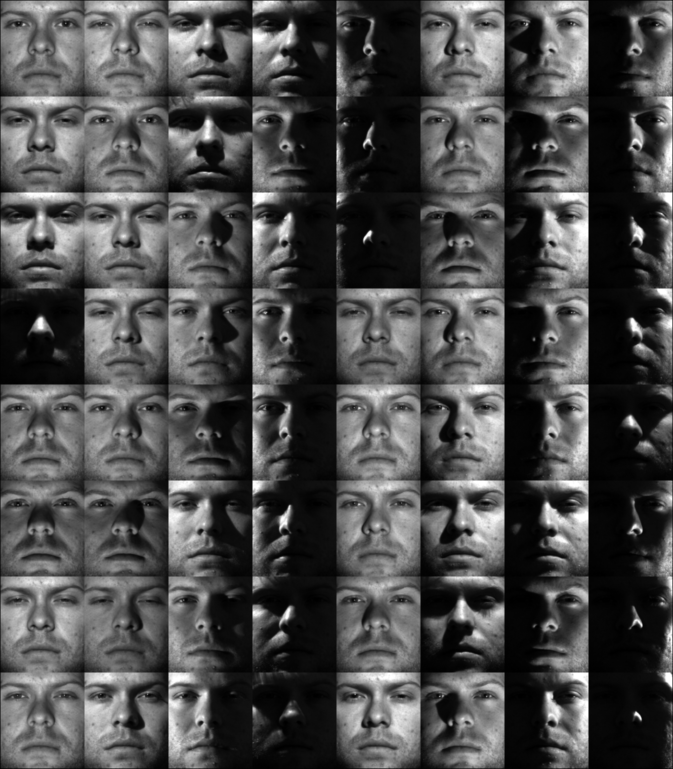}
    \captionsetup{justification=centering}
    \caption{Original faces.}
    \label{fig:face}
  \end{subfigure}
  \hfill
  \begin{subfigure}{.25\textwidth}
    \centering
    \includegraphics[width=\linewidth]{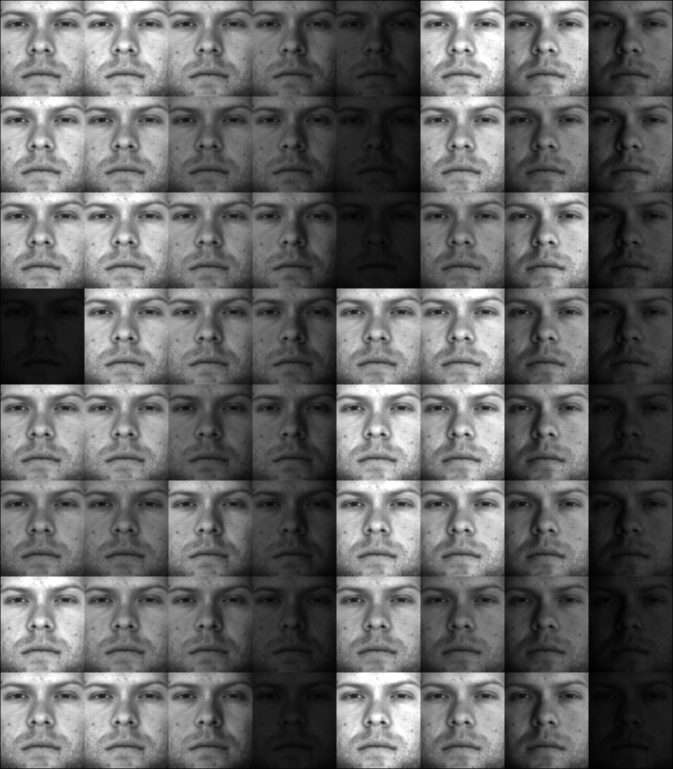}
    \captionsetup{justification=centering}
    \caption{Shadow-removed faces.}
    \label{fig:face_shadow}
  \end{subfigure}
  \caption{The left figure (\ref{fig:matrix_completion}) shows the result of the matrix completion experiment. The middle and right figures (\ref{fig:face}, \ref{fig:face_shadow}) are for the robust principal component analysis experiment.}
  \end{figure}

\section{Conclusion}
In this work, we proposed a simple primal-retrieval strategy for atomic-sparse optimization. We demonstrate both theoretically and empirically that our proposed strategy can obtain good solutions to the cardinality-constrained problem given a dual-based algorithm converging to the optimum dual solution. 

Further research opportunities remain, particularly for designing meaningful primal-retrieval strategies for non-polyhedral and non-spectral atomic sets. The primal-retrieval technique developed in this work is algorithm-agnostic, and it is an open questionn if it's possible to develop more efficient primal-retrieval approaches tailored to specific optimization algorithms, such as the conditional-gradient method.



\bibliographystyle{plain+eid}
\bibliography{shorttitles, master, friedlander}

\appendix

\section*{Appendix}
\section{Derivation of duals}\label{app:duals}

We derive the dual problems~\eqref{prob:dual1},~\eqref{prob:dual2} and~\eqref{prob:dual3} using the Fenchel\textendash Rockafellar duality framework. We use the following result.
\begin{theorem}[\protect{\cite[Corollary~31.2.1]{rockafellar1970convex}}]\label{thm-fenchel}
  Let $f_1:\Re^n\to\Re$ and $f_2:\Re^m\to\Re$ be two closed proper convex functions and let $M$ be a linear operator from $\Re^n$ to $\Re^m$, then
  \[\inf_{x \in \Re^n} f_1(x) + f_2(Mx) = \inf_{y \in \Re^m} f_1^*(M^*y) + f_2^*(-y).\]
  If there exist $x$ in the interior of $\dom f_1$ such that $Mx$ in the interior of $\dom f_2$, then strong duality holds, namely both infima are attained. 
\end{theorem}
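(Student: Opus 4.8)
The plan is to establish the identity through the conjugate-duality (perturbation) framework that underlies Rockafellar's corollary, rather than by manipulating the two infima side by side. I would introduce the value function
\[
  v(u) \;=\; \inf_{x\in\Re^n}\, \bigl\{\, f_1(x) + f_2(Mx + u) \,\bigr\},
  \qquad u\in\Re^m,
\]
so that $v(0)$ is the primal optimal value on the left-hand side. Since $(x,u)\mapsto f_1(x)+f_2(Mx+u)$ is jointly convex (a convex function of $x$ plus the composition of the convex $f_2$ with the linear map $(x,u)\mapsto Mx+u$), its infimal projection $v$ is convex, and properness of $v$ follows from $f_1,f_2$ being closed and proper. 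A direct computation of the conjugate — substitute $z=Mx+u$ and separate the suprema over $x$ and $z$ — identifies $v^*$ in terms of $f_1^*\!\circ M^*$ and $f_2^*$, and hence identifies the right-hand side of the claimed identity with the optimal value of the Fenchel dual problem determined by $v$.

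First I would record weak duality, which holds unconditionally and supplies one of the two inequalities. This comes from the Fenchel--Young inequalities $f_1(x)+f_1^*(M^*y)\ge \ip{x}{M^*y}$ and $f_2(Mx)+f_2^*(-y)\ge \ip{Mx}{-y}$: adding them and cancelling the cross terms through the adjoint relation $\ip{Mx}{y}=\ip{x}{M^*y}$ couples the primal objective at any $x$ with the dual objective at any $y$, and passing to the appropriate infima over $x$ and $y$ yields the weak-duality inequality between the two sides. No constraint qualification is needed at this stage.

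The substantive step is strong duality together with attainment, and this is where the interior hypothesis enters. The key fact is that a convex function that is finite and continuous at a point is subdifferentiable there (Rockafellar, Theorem~23.4). I would show that the assumed condition — the existence of $x$ in the interior of $\dom f_1$ with $Mx$ in the interior of $\dom f_2$ — forces the origin into the interior of $\dom v$, so that $v$ is finite and continuous at $0$. Continuity at $0$ gives $\partial v(0)\ne\varnothing$, and for a convex function subdifferentiability at a point is exactly the statement that the biconjugate agrees with the function there, $v(0)=v^{**}(0)$; this rules out a gap and delivers the equality of the two optimal values. Moreover any $y^*\in\partial v(0)$ solves the dual problem, giving dual attainment; attainment on the primal side then follows symmetrically, by running the same perturbation argument with the roles of primal and dual interchanged, which is how Rockafellar obtains the ``both infima attained'' conclusion under the stated qualification.

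The main obstacle I anticipate is precisely the passage from the \emph{topological} interior condition to subdifferentiability of $v$ at the origin: one must verify that the stated qualification controls $\dom v$ and secures continuity of $v$ near $0$, which at bottom is a separation argument producing a non-vertical supporting hyperplane to $\epi v$ at $\bigl(0,v(0)\bigr)$. The remaining ingredients — joint convexity of the perturbation, the conjugate computation, and the Fenchel--Young inequality for weak duality — are routine, so I would organize the write-up to concentrate effort on the qualification-to-continuity implication and then invoke the subdifferential characterization to conclude both the no-gap equality and the attainment of the two infima.
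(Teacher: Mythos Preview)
The paper does not prove this theorem; it is quoted from Rockafellar's \emph{Convex Analysis} (Corollary~31.2.1) and used as a black box to derive the duals~\eqref{prob:dual1}--\eqref{prob:dual3}. There is no in-paper proof to compare against. Your perturbation-function route is the standard one and is essentially how Rockafellar develops Fenchel duality in Chapter~31: define $v(u)=\inf_x\{f_1(x)+f_2(Mx+u)\}$, compute $v^*$, and use the qualification to force $\partial v(0)\ne\varnothing$, which yields zero duality gap together with a dual optimizer.

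Two points deserve attention. First, if you actually carry out your weak-duality step, the Fenchel--Young inequalities sum to $f_1(x)+f_2(Mx)+f_1^*(M^*y)+f_2^*(-y)\ge 0$, so you obtain $\inf_x[\cdots]\ge -\inf_y[\cdots]$, not an inequality between the two infima as written in the displayed identity. The statement as recorded in the paper carries a sign slip relative to the standard Fenchel formula; your argument will prove the correct version $\inf_x[\cdots]=-\inf_y[\cdots]$ (equivalently, primal infimum equals dual supremum), which is in fact what the paper uses when it writes each \Drobi\ as a minimization problem.

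Second, and more substantively, your plan for primal attainment has a gap. The interior hypothesis places $0\in\operatorname{int}(\dom v)$ and, via $\partial v(0)\ne\varnothing$, delivers no duality gap and a \emph{dual} optimizer $y^*\in\partial v(0)$. It does not by itself produce a primal minimizer. Running the argument ``symmetrically'' would require the dual qualification --- existence of $y$ with $M^*y\in\operatorname{int}(\dom f_1^*)$ and $-y\in\operatorname{int}(\dom f_2^*)$ --- which is not assumed. In Rockafellar's framework a single (relative-)interior condition yields attainment on only one side; obtaining both requires either both qualifications or a separate coercivity/compactness argument. You should either flag that the stated ``both infima attained'' conclusion is stronger than what the single hypothesis supports, or supply an independent argument for primal attainment that does not reduce to symmetry.
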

We also need a result that describes the relationship between gauge, support, and indicator functions. 
\begin{proposition}[\protect{\cite[Proposition~3.2]{fan2019alignment}}] \label{prop-conjugate-gauge}
  Let $C\subset\Re^n$ be a closed convex set that contains the origin. Then 
  \[\gauge_C = \sigma_{C^\circ}=\delta_{C^\circ}^*.\]
\end{proposition}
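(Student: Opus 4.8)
The plan is to handle the two equalities separately, since the second one is essentially definitional and the first carries the real content. For $\sigma_{C^\circ}=\delta_{C^\circ}^*$, I would simply unfold the Fenchel conjugate of an indicator: for any set $S$ one has $\delta_S^*(x)=\sup_y\{\ip{x}{y}-\delta_S(y)\}=\sup_{y\in S}\ip{x}{y}=\sigma_S(x)$, and specializing $S=C^\circ$ gives the claim with no additional hypotheses. This disposes of the right-hand equality immediately.

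For the left-hand equality $\gauge_C=\sigma_{C^\circ}$, the key ingredient is the bipolar theorem. Because $C$ is closed, convex, and contains the origin, its hypotheses are met, so $C^{\circ\circ}=C$; equivalently, $C=\Set{x\mid \ip{x}{y}\le 1 \ \forall y\in C^\circ}$. I would then unfold the Minkowski gauge $\gauge_C(x)=\inf\Set{\lambda\ge 0\mid x\in\lambda C}$ and translate the membership condition through this characterization. For $\lambda>0$, one has $x\in\lambda C$ iff $x/\lambda\in C=C^{\circ\circ}$ iff $\ip{x/\lambda}{y}\le 1$ for every $y\in C^\circ$ iff $\ip{x}{y}\le\lambda$ for every $y\in C^\circ$, which is exactly $\sigma_{C^\circ}(x)\le\lambda$. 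Taking the infimum over admissible $\lambda$ yields $\gauge_C(x)=\inf\Set{\lambda\ge 0\mid \sigma_{C^\circ}(x)\le\lambda}$. Since $0\in C$ forces $0\in C^\circ$, the support function obeys $\sigma_{C^\circ}(x)\ge\ip{x}{0}=0$, so this last infimum collapses to $\sigma_{C^\circ}(x)$ itself.

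I expect the main obstacle to be boundary bookkeeping rather than any deep argument. Three degenerate cases must be checked against the reduction above: at $x=0$ both sides vanish; when $x\ne 0$ but $\sigma_{C^\circ}(x)=0$, the vector $x$ lies in every positive dilate $\lambda C$, so $\gauge_C(x)=0$ in agreement with $\sigma_{C^\circ}(x)=0$; and when $\sigma_{C^\circ}(x)=+\infty$, no finite $\lambda$ satisfies the membership condition, so $\gauge_C(x)=+\infty$ under the convention $\inf\emptyset=+\infty$. I would also remark that the $\lambda=0$ term permitted in the gauge's infimum contributes only the point $x=0$, which is consistent with all three cases and does not alter the value obtained from the $\lambda>0$ analysis.

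Assembling these pieces gives $\gauge_C(x)=\sigma_{C^\circ}(x)$ for every $x\in\Re^n$, and chaining with the conjugate computation from the first paragraph delivers the full identity $\gauge_C=\sigma_{C^\circ}=\delta_{C^\circ}^*$. An alternative to the bipolar route would be a direct two-inequality argument ($\sigma_{C^\circ}\le\gauge_C$ from scaling an attaining representative of $x$, and the reverse from separating a point outside $\lambda C$), but the bipolar formulation is cleaner and avoids invoking attainment of the gauge infimum, so I would adopt it as the primary strategy.
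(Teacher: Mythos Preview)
Your argument is correct. The second equality is indeed definitional, and your bipolar-theorem route for the first equality is the standard clean proof; the boundary cases you flag are exactly the ones that need checking, and your treatment of them is sound.

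There is nothing to compare against, however: the paper does not prove this proposition at all. It is quoted verbatim from \cite[Proposition~3.2]{fan2019alignment} and used as a black box in the derivation of the dual problems in Appendix~A. So your proposal supplies a proof where the paper offers none.
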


For problem~\eqref{prob:primal1}, let
\[
  f_1 := \lambda\gauge\As \enspace\text{and}\enspace f_2 := f(b - \cdot)
\]
By the properties of conjugate functions and~\cref{prop-conjugate-gauge}, we obtain 
\[
  f_1^* =  \delta_{(\frac{1}{\lambda}\Ascr)^\circ} = \delta_{\{x\mid \sigma\As(x)\leq\lambda\}} \enspace\text{and}\enspace f_2^* = \ip{b}{\cdot} + f^*(-\cdot).
\]
Then by~\cref{thm-fenchel}, we can get the dual problem for~\eqref{prob:primal1} as
\[\minimize{y\in\Re^m} f^*(y) - \ip{b}{y} \enspace\text{subject to}\enspace \sigma\As(M^*y)\leq\lambda.\]

For~\eqref{prob:primal2},
\[f_1 = \delta_{\gauge\As\leq\tau} = \delta_{\tau\Ascr} \enspace\text{and}\enspace f_2 = f(b - \cdot).
\]
By the properties of conjugate functions and~\cref{prop-conjugate-gauge}, we obtain 
\[f_1^* = \sigma_{\tau\Ascr} = \tau\sigma\As \enspace\text{and}\enspace f_2^* = \ip{b}{\cdot} + f^*(-\cdot).
\]
Then by~\cref{thm-fenchel}, it follows that the dual problem for~\eqref{prob:primal2} is 
\[\minimize{y\in\Re^m} f^*(y) - \ip{b}{y} + \tau\sigma\As(M^*y).\]

For~\eqref{prob:primal3},
\[
  f_1 = \gauge\As \enspace\text{and}\enspace f_2 = \delta_{\{x\mid f(b - x)\leq\alpha\}}.
\]
By the properties of conjugate functions and~\cref{prop-conjugate-gauge}, we can get that 
\[
  f_1^* = \delta_{\{x\mid \sigma\As(x)\leq1\}} \enspace\text{and}\enspace f_2^* = \sigma_{\{f(b - x)\leq\alpha\}}.\]
Then by~\cite[Example~E.2.5.3]{hiriart-urruty01}, we know that the support function of the sublevel set is 
\[f_2^* = \sigma_{\{x\mid f(b - x)\leq\alpha\}} = \min_{\beta > 0} \beta\left(f^*\left(-\frac{\cdot}{\beta}\right) + \alpha\right) + \ip{b}{\cdot}.\]
Finally, by~\cref{thm-fenchel}, we can get the dual problem for~\eqref{prob:primal3} as
\[\minimize{y\in\Re^m,\ \beta > 0} \beta \left( f^*\left( \frac{y}{\beta} \right) + \alpha \right) - \ip{b}{y} \enspace \text{subject to}\enspace \sigma\As(M^* y) \leq 1.\]

\section{Proof of Theorem~\ref{thm:p0}}\label{app:main_proof}

The proof of this Theorem relies on the following duality property between smoothness and strong convexity.
\begin{lemma}[\protect{\cite[Theorem~6]{kakade2009duality}}] \label{lemma:conjugate}
\label{lemma:smooth_cvx_conjugate}
   If $f$ is $L$-smooth, then $f^*$ is $\frac{1}{L}$-strongly convex.
\end{lemma}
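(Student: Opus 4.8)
I would verify directly the defining inequality of $\tfrac1L$-strong convexity for $g:=f^*$: for all $y_1,y_2\in\Re^m$ and $\lambda\in[0,1]$, with $y_\lambda:=\lambda y_1+(1-\lambda)y_2$,
\[
  f^*(y_\lambda)\ \le\ \lambda f^*(y_1)+(1-\lambda)f^*(y_2)-\frac{\lambda(1-\lambda)}{2L}\,\|y_1-y_2\|^2 .
\]
The one analytic fact I need is the standard interpolation inequality for a convex, $L$-smooth $f$ (recall $f$ is convex by the paper's blanket assumption): for all $x,x_0$,
\[
  f(x)\ \ge\ f(x_0)+\ip{\nabla f(x_0)}{x-x_0}+\frac{1}{2L}\,\|\nabla f(x)-\nabla f(x_0)\|^2 .
\]
This follows by applying the descent lemma to the shifted function $h(z):=f(z)-\ip{\nabla f(x_0)}{z}$, which is convex, $L$-smooth, and has $x_0$ as a minimizer: evaluating the descent bound at the point $x-\tfrac1L\nabla h(x)$ and using that $x_0$ minimizes $h$ gives $h(x_0)\le h(x)-\tfrac1{2L}\|\nabla h(x)\|^2$, and substituting $\nabla h=\nabla f-\nabla f(x_0)$ rearranges to the claim.

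Next I would pick $x_i\in\partial f^*(y_i)$; by the Fenchel--Young equality this is equivalent to $y_i\in\partial f(x_i)$, hence $y_i=\nabla f(x_i)$ and $f^*(y_i)=\ip{x_i}{y_i}-f(x_i)$ (using differentiability of $f$). Applying the interpolation inequality with $x_0=x_i$ and rearranging yields, for every $x$,
\[
  \ip{x}{y_i}-f(x)\ \le\ f^*(y_i)-\frac{1}{2L}\,\|\nabla f(x)-y_i\|^2 ,\qquad i=1,2 .
\]
Taking the $\lambda$-convex combination of the two bounds and using the identity $\lambda\|z-y_1\|^2+(1-\lambda)\|z-y_2\|^2=\|z-y_\lambda\|^2+\lambda(1-\lambda)\|y_1-y_2\|^2$ with $z=\nabla f(x)$ to discard the nonnegative $\|z-y_\lambda\|^2$ term, I obtain $\ip{x}{y_\lambda}-f(x)\le\lambda f^*(y_1)+(1-\lambda)f^*(y_2)-\tfrac{\lambda(1-\lambda)}{2L}\|y_1-y_2\|^2$ for all $x$; taking the supremum over $x$ on the left turns it into $f^*(y_\lambda)$, which is the desired inequality.

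The only delicate point is the step ``pick $x_i\in\partial f^*(y_i)$'', which requires $\partial f^*(y_i)\neq\emptyset$: this holds for $y_i\in\ri(\dom f^*)$ but can fail on $\bnd(\dom f^*)$, and is vacuous when $y_i\notin\dom f^*$ (the inequality is then trivial). I would handle this by first proving the inequality for $y_1,y_2\in\ri(\dom f^*)$ as above, then extending to all of $\dom f^*$ using the closedness of $f^*$ together with continuity of the right-hand side along the segment $[y_1,y_2]$. Alternatively, one can sidestep the domain issue by instead checking the first-order monotone form of strong convexity, $\ip{u-v}{p-q}\ge\tfrac1L\|p-q\|^2$ for all $u\in\partial g(p)$ and $v\in\partial g(q)$; by the same conjugate--subgradient inversion this reduces to the Baillon--Haddad cocoercivity bound $\ip{\nabla f(u)-\nabla f(v)}{u-v}\ge\tfrac1L\|\nabla f(u)-\nabla f(v)\|^2$, which follows by adding the interpolation inequality in its two symmetric instances $(x,x_0)=(u,v)$ and $(v,u)$. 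I expect this domain bookkeeping to be the main obstacle; the rest is routine.
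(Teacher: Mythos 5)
The paper does not prove this lemma at all: it is quoted verbatim from Kakade, Shalev-Shwartz and Tewari~\cite{kakade2009duality} (their Theorem~6), so there is no in-paper argument to compare against. Your self-contained proof is correct and is essentially the standard one. The core chain --- the interpolation inequality $f(x)\ge f(x_0)+\ip{\nabla f(x_0)}{x-x_0}+\tfrac1{2L}\|\nabla f(x)-\nabla f(x_0)\|^2$ (your derivation via the shifted function $h$ is fine), the conjugate--subgradient inversion $x_i\in\partial f^*(y_i)\iff y_i=\nabla f(x_i)$ with Fenchel--Young equality, the variance identity to absorb the two quadratic terms, and the final supremum over $x$ --- is airtight, and the blanket assumption that $f:\Re^n\to\Re$ is convex and $L$-smooth supplies exactly the hypotheses you use. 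You are also right that the only delicate point is $\partial f^*(y_i)\neq\emptyset$, which can genuinely fail on $\bnd(\dom f^*)$ (e.g.\ $f(x)=\sqrt{1+x^2}$, where $\dom f^*=[-1,1]$ but $\partial f^*(\pm1)=\emptyset$); your fix via $\ri(\dom f^*)$ plus closedness of $f^*$ and a limiting argument along the segment works, though note that your ``alternative'' route through strong monotonicity of $\partial f^*$ does not actually sidestep this --- passing from strong monotonicity on $\dom\partial f^*$ back to the strong-convexity inequality on all of $\dom f^*$ requires the same boundary-limit bookkeeping. What your approach buys over the paper's bare citation is a transparent, elementary proof using only the descent lemma; what the citation buys is brevity and the converse direction (smoothness $\Leftrightarrow$ strong convexity of the conjugate), which the paper does not need.
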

\begin{proof}[Proof of Theorem~\ref{thm:p0}]
\begin{itemize}
  \item[a)] Let $y^*$ denote the optimal dual variable for~\ref{prob:dual1}. First, we show that $\|y - y^*\|$ can be bounded by the duality gap. Let $g(y) \coloneqq f^*(y) - \ip{b}{y}$. By~\cref{lemma:conjugate}, $f^*$ is $\frac{1}{L}$-strongly convex, and it follows that $g$ is also $\frac{1}{L}$-strongly convex. By the definition of strong convexity, 
  \[\forall s \in \partial g(y^*), \enspace g(y) \geq g(y^*) + \ip{s}{y- y^*} + \frac{1}{2L}\|y - y^*\|^2.
  \]
  Optimality requires that 
  \[\exists s \in \partial g(y^*), \enspace \ip{s}{y- y^*} \geq 0 \quad \forall y \enspace\text{s.t.}\enspace \sigma\As(M^*y)\leq\lambda.\]
  Therefore, reordering the inequality gives
  \begin{align*}
     \|y - y^*\| & \leq \sqrt{2L(g(y) - g(y^*))}
     \quad \forall x \in \Re^n.
  \end{align*} 

  Next, we show that $\Escr\As(M^*y^*) \subseteq \Escr\As(M^*y, \epsilon_1)$. For any $a \in \Escr\As(M^*y^*)$, 
  \begin{align*}
    \ip{a}{M^*y} &= \sigma\As(M^*y^*) + \ip{Ma}{y - y^*}
    \\&\geq \sigma\As(M^*y^*) - \left(\max\limits_{a \in \Ascr}\|Ma\|\right)\|y - y^*\|
    \\&\geq \sigma\As(M^*y^*) - \left(\max\limits_{a \in \Ascr}\|Ma\|\right)\sqrt{2L(g(y) - g(y^*))}
    \\&\geq \sigma\As(M^*y) - \epsilon_1,
  \end{align*}
  where the last inequality follows from the definition of $\epsilon_1$ in \Cref{thm:p0} and the fact that $\sigma\As(M^*y^*) = \lambda$ and $y$ is feasible for \eqref{prob:dual1}. 

  \item[b)] Let $y^*$ denote the optimal dual variable for~\ref{prob:dual2}. First, we show that $\|y - y^*\|$ can be bounded by the duality gap. Let $g(y) \coloneqq f^*(y) - \ip{b}{y} + \tau\sigma\As(M^*y)$. By~\cref{lemma:conjugate}, $f^*$ is $\frac{1}{L}$-strongly convex, and it follows that $g$ is also $\frac{1}{L}$-strongly convex. 
  By the definition of strongly convex, 
  \[\forall s \in \partial g(y^*), \enspace g(y) \geq g(y^*) + \ip{s}{y- y^*} + \frac{1}{2L}\|y - y^*\|^2.\]
  By optimality, $0 \in \partial g(y^*)$. Reordering the inequality to deduce that
  \[\|y - y^*\|_2 \leq \sqrt{2L(g(y) - g(y^*))}.\]

  Next, we show that $\Escr\As(M^*y^*) \subseteq \Escr\As(M^*y, \epsilon_2)$. For any $a \in \Escr\As(M^*y^*)$, 
  \begin{align*}
    \ip{a}{M^*y} &\geq \sigma\As(M^*y^*) - \left(\max\limits_{a \in \Ascr}\|Ma\|\right)\|y - y^*\|
    \\&= \sigma\As(M^*y) - \left(\sigma\As(M^*y) - \sigma\As(M^*y^*)\right) - \left(\max\limits_{a \in \Ascr}\|Ma\|\right)\|y - y^*\|
    \\&\geq \sigma\As(M^*y) - 2\left(\max\limits_{a \in \Ascr}\|Ma\|\right)\|y - y^*\|
    \\&\geq \sigma\As(M^*y) - \epsilon_2,
  \end{align*}
  where the last inequality follows from the definition of $\epsilon_2$ in \Cref{thm:p0}.

  \item[c)] Let $(y^*, \beta^*)$ denote the optimal dual variables for~\ref{prob:dual3}. First, we show that $\|y - y^*\|$ can be bounded by the duality gap. Let 
  \[g(y) \coloneqq \beta^*f^* \left(\frac{y}{\beta^*} \right) + \beta^*\alpha - \ip{b}{y}.\] 
  By~\cref{lemma:conjugate}, $f^*$ is $\frac{1}{L}$-strongly convex, and it's not hard to check that $g$ is $\frac{1}{\beta^*L}$-strongly convex. By the definition of strongly convex,
  \[\forall s \in \partial g(y^*), \enspace g(y) \geq g(y^*) + \ip{s}{y- y^*} + \frac{1}{2\beta^*L}\|y - y^*\|^2.\]
  By optimality,
  \[\exists s \in \partial g(y^*), \enspace\ip{s}{y- y^*} \geq 0 \quad \forall y \enspace\text{s.t.}\enspace \sigma\As(M^*y)\leq1.\]
  Reorder the inequality to deduce that 
  \[\|y - y^*\| \leq \sqrt{2\beta^*L(g(y) - g(y^*))}.\]

  Since $\beta^*$ is unknown to us, we will then get an upper bound for $d_3(y, \beta^*)$. Fix $y$, let $h(\beta) = d_3(y, \beta)$. By the property of perspective function, we know that $h$ is convex. Then it follows that 
  \[d_3(y, \beta^*) \leq \max\set{d_3(y, \underline{\beta}), d_3(y, \overline{\beta})}.\]
  Therefore,
  \[ \|y - y^*\| \leq \sqrt{2\overline{\beta}L \left( \max\set{d_3(y, \underline{\beta}), d_3(y, \overline{\beta})} - d_3(y^*, \beta^*)\right) } .\]

  Finally, we show that $\Escr\As(M^*y^*) \subseteq \Escr\As(M^*y, \epsilon_3)$. For any $a \in \Escr\As(M^*y^*)$, 
  \begin{align*}
    \ip{a}{M^*y} &\geq \sigma\As(M^*y^*) - \left(\max\limits_{a \in \Ascr}\|Ma\|\right)\|y - y^*\|
    \\&= \sigma\As(M^*y) - \left(\sigma\As(M^*y) - \sigma\As(M^*y^*)\right) - \left(\max\limits_{a \in \Ascr}\|Ma\|\right)\|y - y^*\|
    \\&\geq \sigma\As(M^*y) - 2\left(\max\limits_{a \in \Ascr}\|Ma\|\right)\|y - y^*\|
    \\&\geq \sigma\As(M^*y) - \epsilon_3.
  \end{align*}
\end{itemize}
\end{proof}

\section{Upper and lower bound for $\beta^*$}
\label{app:bounds}
First, we consider~\eqref{prob:dual3}. Let $w = y/\beta$, then~\eqref{prob:dual3} can be equivalently expressed as 
\[\minimize{w} \inf_{\beta>0} \beta(f^*(w) - \ip{b}{w} + \alpha) \enspace\text{subject to}\enspace \sigma\As(M^* w) \leq \beta.\]
Fix $\beta = \beta^*$, then~\eqref{prob:dual3} can be expressed as 
\begin{equation} \label{prob:dual3equiv}
  \minimize{w} f^*(w) - \ip{b}{w} \enspace\text{subject to}\enspace \sigma\As(M^* w) \leq \beta^*.
\end{equation}
Now compare~\eqref{prob:dual3equiv} with~\eqref{prob:dual1} to conclude that they are equivalent when $\lambda = \beta^*$. It thus follows that~\eqref{prob:primal3} is equivalent to  
\begin{equation*}
    \minimize{x} f(b - Mx) + \beta^*\gauge\As(x). 
\end{equation*}

Next, consider using the level-set method~\cite{aravkin2016levelset} with bisection to solve~\eqref{prob:primal3}. There exists $\tau^* > 0$ such that~\eqref{prob:primal3} is equivalent to
\begin{equation*}
    \minimize{x} f(b - Mx) \enspace\text{subject to}\enspace \gauge\As(x)\leq\tau^*. 
\end{equation*}
With the level-set method, we are able to get $(x_1, \tau_1)$ and $(x_2, \tau_2)$ such that $\tau_1 \leq \tau^* \leq \tau_2$ and $x_i$ is the optimum for 
\begin{equation*}
    \minimize{x} f(b - Mx) \enspace\text{subject to}\enspace \gauge\As(x)\leq\tau_i, 
\end{equation*}
for $i = 1, 2$. Then there exits $\beta_1$ and $\beta_2$ such that $\beta_1 \geq \beta^* \geq \beta_2$ and $x_i$ is optimal for 
\begin{equation*}
    \minimize{x} f(b - Mx) + \beta_i\gauge\As(x),
\end{equation*}
for $i = 1, 2$.

Finally, by~\cite[Theorem~5.1]{fan2019alignment} we can conclude that 
\[\beta_i = \sigma\As(M^*\nabla f(b - Mx_i)) \text{for} i = 1,2.\]
Therefore, we can get upper and lower bounds for $\beta^*$ via level-set method with bisection. Moreover, by strong duality and convergence of the bisection method, the gap between $\beta_1$ and $\beta_2$ will converge to zero.

\section{Proof of Proposition~\ref{prop:polyhedral}}
\label{app:prop_proof}
\begin{proof}
  First, we show that for any $y_i$ such that $\| y_i - y_i^* \| \leq \frac{\delta}{4\| M \|_\Ascr}$, the condition 
  \[\Face\As( M^* y_i^*) \subseteq \EssCone(\Ascr, M, y_i, k)\]
  holds. By \cref{ass:blanket} and the definition of $\delta$-nondegeneracy, we know that 
  \begin{align}
      |  \Face\As( M^* y_i^*) | = k, \quad \text{and} \quad \langle Ma, y_i^* \rangle \leq \sigma_{\Ascr}( M^* y_i^* ) - \delta \quad \forall a \notin \Face\As( M^* y_i^*). \label{eq:Aprime}
  \end{align}
  For any $a \in \Face\As( M^* y_i^*)$, we have
  \begin{align*}
      & \langle a, M^* y_i \rangle \\
      \geq~& \langle a, M^* y_i^* \rangle - | \langle M a, y_i^* - y_i \rangle | \\
      \geq~& \sigma_{\Ascr}( M^* y_i^* ) - \| M \|_\Ascr \frac{\delta}{4\|M\|_{\Ascr}} \quad \text{(by the condition $\| y_i - y_i^* \| \leq \frac{\delta}{4\| M \|_\Ascr}$)} \\
      \geq~& \sigma_{\Ascr}( M^* y_i^* ) - \frac{\delta}{4}.
  \end{align*}
  On the other hand, for any $a' \notin \Face\As( M^* y_i^*)$, we have 
  \begin{align*}
      & \langle a', M^* y_i \rangle \\
      \leq~ & \langle a', M^* y_i^* \rangle + | \langle M a', y_i^* - y_i \rangle | \\
      \leq~ & \langle a', M^* y_i^* \rangle + \frac{\delta}{4} \\
      \leq~ & \sigma_{\Ascr}( M^* y_i^* ) - \delta + \frac{\delta}{4} \quad \text{(By \cref{eq:Aprime})} \\
      =~& \sigma_{\Ascr}( M^* y_i^* ) - \frac{3\delta}{4}.
  \end{align*}
    Therefore, 
    \begin{align*}
        \langle a, M^* y_i \rangle > \langle a', M^* y_i \rangle  \quad \forall a \in \Face\As( M^* y_i^*) \text{and} a' \notin \Face\As( M^* y_i^*).
    \end{align*}
    Notice that $\EssCone(\Ascr, M, y_i, k)$ contains only the atoms that corresponds to the $k$ largest $\langle a, M^* y_i \rangle$. Therefore $\Face\As( M^* y_i^*) \subseteq \EssCone(\Ascr, M, y_i, k)$.
    
    By the assumption $y_i^{(t)} \to y_i^*$. For $i \in \{1,2,3\}$, we know there exist $T_i > 0$ such that $\| y_i^{(t)} - y_i^* \| < \frac{\delta}{4\| M \|_\Ascr}$ for all $t > T_i$. Therefore $\Face\As( M^* y_i^*) \subseteq \EssCone(\Ascr, M, y_i^{(t)}, k)~~\forall t > T_i$ and we complete the proof.
\end{proof}

\section{Proof for \Cref{thm:svd_approx_score}}
\begin{proof}
  First, we derive a monotonicity property of $\dist(\cdot, \cdot)$. By the definition of $\dist(\cdot, \cdot)$, it follows that
  \begin{equation}
    \dist(A, C) \leq \dist(B, C) \quad \forall A, B, C \subseteq \mR^{n \times m} ~\text{such that}~ A \subseteq B. \label{eq:distIneq}
  \end{equation}
  For any $i \in \{1,2,3\}$, we know that $ \Sscr_{\Ascr}(X^*) \subseteq  \Escr\As( Z, \epsilon_i )$ by \Cref{thm:p0}. Then by \cref{eq:distIneq}, we have 
  \[
    \dist( \Sscr_{\Ascr}(X^*),\ \EssCone_{\Ascr, k}(Z)  ) \leq \dist( \Escr\As( Z, \epsilon_i ), \ \EssCone_{\Ascr, k}(Z)  ).
  \]
  For any $\Ascr_1, \Ascr_2 \subseteq \Ascr$,
  \begin{align*}
    \rho( \Ascr_1, \Ascr_2 ) = \sqrt{ \adjustlimits\sup_{a_1 \in \Ascr_1} \inf_{a_2 \in \Ascr_2} \| a_1 - a_2 \|_F^2 } = \sqrt{ 2 - 2 \left( \adjustlimits \inf_{a_1 \in \Ascr_1} \sup_{a_2 \in \Ascr_2} \ip{a_1}{a_2} \right) },
  \end{align*}
  where the second equality holds since $\| a_1 \|_F = \| a_2 \|_F = 1$ by the definition of $\Ascr$. Define $\Ascr_1 = \Escr\As( Z, \epsilon_i )$ and $\Ascr_2 = \EssCone_{\Ascr, k}(Z) = \Set{ U_r p q^T V_r^T | \|p\|_2 = \|q\|_2 = 1 }$, where $U_r, V_r$ are the top-$r$ singular vectors of $M^*y$. Let $k \coloneqq \min\{n,m\}$, $\Cscr_1 = \{(p, q) \mid \sum_{i=1}^{k} \sigma_i p_i q_i \geq \sigma_1 - \epsilon_i,~ \|p\|_2 = \|q\|_2 = 1, ~p,q \in \mR^{ k }\}$ and $\Cscr_2 = \{(\hat{p}, \hat{q}) \mid \|\hat{p}\|_2 = \|\hat{q}\|_2 = 1, ~\hat{p},\hat{q} \in \mR^{ r }\}$, then
  \begin{align}
    \rho( \Ascr_1, \Ascr_2  ) & = \sqrt{ 2 - 2 \left( \min_{p, q \in \Cscr_1} \max_{ \hat{p}, \hat{q} \in \Cscr_2} \ip{ Up q^T V^T }{ U_r \hat{p} \hat{q}^T V_r^T } \right) } \nonumber \\
    & = \sqrt{ 2 - 2 \left( \min_{p, q \in \Cscr_1} \max_{ \hat{p}, \hat{q} \in \Cscr_2} \left( \sum_{i=1}^r p_i \hat{p}_i \right) \left( \sum_{i=1}^r q_i \hat{q}_i \right) \right) } \nonumber \\
    & = \sqrt{ 2 - 2 \left(\min_{p, q \in \Cscr_1} \| p_{1:r} \|_2 \| q_{1:r} \|_2 \right) }. \label{eq:rho_exp} 
  \end{align}
  Now we consider the subproblem in \eqref{eq:rho_exp}: 
  \begin{align}
    \min_{p,q}\enspace & \| p_{1:r} \|_2 \| q_{1:r} \|_2 \label{eq:reduced_p1} \tag{P$_{\mathrm{sub}}$} \\
    \text{subject to} & \sum_{i=1}^{k} \sigma_i p_i q_i \geq \sigma_1 - \epsilon_i,~ \|p\|_2 = \|q\|_2 = 1, ~p,q \in \mR^{ k }. \nonumber
  \end{align}
  If $p^*$ and $q^*$ is a solution of the problem \eqref{eq:reduced_p1}, then it's easy to verify that 
  \begin{align*}
    & \tilde{p} = \left[ \| p^*_{1:r} \|_2, 0,\ldots, \|p^*_{r+1:k}\|_2, 0, \ldots, 0 \right] \\ 
    \text{and} ~& \tilde{q} = \left[ \| q^*_{1:r} \|_2, 0,\ldots, \|q^*_{r+1:k}\|_2, 0, \ldots, 0 \right]
  \end{align*}
  is also a valid solution. Therefore there must exist solution $p^*, q^*$ such that $p_i = q_i = 0~\forall i \notin \{1, r+1\}$, that is only $p^*_1, q^*_1$ and $p^*_{r+1}$ and $q^*_{r+1}$ are greater or equal than 0. This allow us to further reduce the problem to
  \begin{align*}
    \min_{ p_1, q_1, p_{r+1}, q_{r+1} } &~  p_1 q_1 \\
    \text{subject to} & \sigma_1 p_1 q_1 + \sigma_{r+1} p_{r+1} q_{r+1} \geq \sigma_1 - \epsilon_i, \\
    & p_1^2 + p_{r+1}^2 = q_1^2 + q_{r+1}^2 = 1, ~p_1, q_1, p_{r+1}, q_{r+1} \geq 0. 
  \end{align*}
  It's easy to verify that when $\sigma_1 - \sigma_{r+1} \geq \epsilon_i$, the above problem attains solution at 
  \[
    p_1 = q_1 = \sqrt{\frac{\sigma_1 - \sigma_{r+1} - \epsilon_i}{ \sigma_1 - \sigma_{r+1} } } \text{and} p_{r+1} = q_{r+1} = \sqrt{1 - p_1^2}.
  \]
  When $\sigma_1 - \sigma_{r+1} < \epsilon_i$, the solution is simply $p_1 = q_1 = 0, p_{r+1}= q_{r+1} = 1$.
  Therefore the optimal value of \eqref{eq:reduced_p1} is $\max\{1 - \epsilon_i / ( \sigma_1 - \sigma_{r+1} ), 0\}$, plug this into \cref{eq:rho_exp} and we finish the proof. 
\end{proof}

\section{Proof of \Cref{prop:spectral}}

We describe the following lemma before proceeding to the proof of \Cref{prop:spectral}. 
\begin{lemma}[Hausdorff error bound] \label{lemma:dist_to_sol}
  Given $\widehat{\Ascr} \subseteq \Ascr$, there exists $ X \in \cone ( \widehat{\Ascr} ) $ such that 
  \begin{equation}
          \| X - X^* \|_F \leq \dist( \suppa(X^*), \widehat{\Ascr} )\cdot \sqrt{| \suppa(X^*) |} \cdot \| X^* \|_F.  \label{eq:dist_to_sol}
  \end{equation}
\end{lemma}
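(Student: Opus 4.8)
The plan is to construct the approximating point $X$ directly from an atomic decomposition of $X^*$, replacing each support atom by a nearby atom of $\widehat{\Ascr}$. Write $r \coloneqq |\suppa(X^*)|$ and fix an atomic decomposition $X^* = \sum_{i=1}^r c_i a_i$ with $a_i \in \suppa(X^*)$, $c_i > 0$, and $\sum_{i=1}^r c_i = \gauge\As(X^*)$, which exists by the definition of the atomic support. By the definition of the one-sided Hausdorff distance we have $\inf_{\hat a \in \widehat{\Ascr}} \| a_i - \hat a \|_F \le \dist(\suppa(X^*), \widehat{\Ascr})$ for every $i$; using compactness of $\widehat{\Ascr} = \Ascr_k$ (or, failing that, an arbitrarily small slack sent to zero at the end together with closedness of $\cone(\widehat{\Ascr})$) we may pick $\hat a_i \in \widehat{\Ascr}$ with $\| a_i - \hat a_i \|_F \le \dist(\suppa(X^*), \widehat{\Ascr})$. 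Set $X \coloneqq \sum_{i=1}^r c_i \hat a_i$, which lies in $\cone(\widehat{\Ascr})$ since each $c_i > 0$.

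The second step is a one-line triangle-inequality estimate:
\[
  \| X - X^* \|_F = \Bigl\| \sum_{i=1}^r c_i (\hat a_i - a_i) \Bigr\|_F \le \sum_{i=1}^r c_i \, \| \hat a_i - a_i \|_F \le \dist(\suppa(X^*), \widehat{\Ascr}) \sum_{i=1}^r c_i = \dist(\suppa(X^*), \widehat{\Ascr}) \cdot \gauge\As(X^*).
\]
It then remains to bound $\gauge\As(X^*)$ by $\sqrt{r}\,\| X^* \|_F$. Here I would invoke the spectral structure relevant to \Cref{prop:spectral}: for the rank-one atomic sets of \cref{sec:spectral} one has $\gauge\As = \|\cdot\|_*$, and the support atoms $a_1,\dots,a_r$ are outer products of orthonormal left/right singular vectors (resp.\ orthonormal eigenvectors), hence of unit Frobenius norm and pairwise orthogonal in the Frobenius inner product, with coefficients $c_i$ equal to the singular values of $X^*$; see \Cref{tab:common-atoms}. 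Consequently $\| X^* \|_F^2 = \sum_{i=1}^r c_i^2$, and Cauchy--Schwarz gives $\gauge\As(X^*) = \sum_{i=1}^r c_i \le \sqrt{r}\,\bigl( \sum_{i=1}^r c_i^2 \bigr)^{1/2} = \sqrt{|\suppa(X^*)|}\,\| X^* \|_F$. Substituting this into the previous display yields~\eqref{eq:dist_to_sol}.

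I do not expect a serious obstacle here; the two points needing a little care are (i) that the infimum defining the one-sided Hausdorff distance is attained (or approximable), which follows from compactness of $\widehat{\Ascr}=\Ascr_k$, and (ii) the pairwise orthogonality of the atoms in $\suppa(X^*)$, which is precisely the spectral characterization of the nuclear-/trace-norm support used throughout \cref{sec:spectral} and is what makes the reverse-type bound $\gauge\As(X^*)\le\sqrt{|\suppa(X^*)|}\,\|X^*\|_F$ available (such a bound is false for general atomic sets).
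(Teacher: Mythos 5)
Your proposal is correct and follows essentially the same route as the paper's proof: replace each support atom $a_i$ of $X^*$ by a nearby $\hat a_i\in\widehat{\Ascr}$ within one-sided Hausdorff distance, apply the triangle inequality to get the factor $\dist(\suppa(X^*),\widehat{\Ascr})\sum_i c_i$, and then bound $\sum_i c_i\le\sqrt{|\suppa(X^*)|}\,\|X^*\|_F$ via Cauchy--Schwarz using the orthonormality of the rank-one support atoms (so that $\|X^*\|_F^2=\sum_i c_i^2$). Your two added points of care — attainment of the infimum and the explicit remark that the last bound hinges on the spectral (orthogonal rank-one) structure and fails for general atomic sets — are both sound and, if anything, slightly more careful than the paper's own write-up.
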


\begin{proof}
  Let $X^* = \sum_{a \in \suppa(X^*)} c_a a, c_a > 0$. By the definition of the one-sided Hausdorff distance $\dist(\cdot,\cdot)$, for any $a \in \suppa(X^*)$, there exist a corresponding $\hat{a} \in \widehat{\Ascr}$ such that 
  \[ 
    \| \hat{a} - a \|_F \leq \dist( \suppa(X^*), \widehat{\Ascr} ).
  \]
  Let $\hat{X} = \sum_{a \in \suppa(X^*)} c_a \hat{a} $, then it's straighforward to verify that $\hat{X} \in \cone ( \widehat{\Ascr} )$ and 
  \[
    \| X - X^* \|_F \leq \dist( \suppa(X^*), \widehat{\Ascr} ) \sum_{a \in \suppa(X^*)} c_a \overset{\rm{(i)}}{\leq} \dist( \suppa(X^*), \widehat{\Ascr} ) \sqrt{| \suppa(X^*) |} \| X^* \|_F,
  \]
  where (i) follows from the orthonormal decomposition $x^* = \sum_{a \in \suppa(X^*)} c_a a, c_a > 0$ and $\| X^* \|_F^2 = \sum c_a^2$ when our atomic set is the set of rank-one matrices.
\end{proof}

\begin{proof}[Proof of \Cref{prop:spectral}]
  By \Cref{lemma:dist_to_sol}, we know that there exist $\tilde{X}$ satisfies \cref{eq:dist_to_sol}.
  Then by the $L$-smoothness of $f$,
  \begin{align}
      f( b - M \tilde{X} ) &~\leq~ f( b - M x^* ) + \langle \nabla f( b - M X^* ), M( X^* - \tilde{X} ) \rangle + \frac{L}{2} \| M( X^* - \tilde{X} ) \|_F^2  \nonumber \\
      &~\leq~ f( b - M X^* ) + \| \nabla f( b - M X^* ) \|_F \| M( X^* - \tilde{X} ) \|_F + \frac{L}{2} \| M( X^* - \tilde{X} ) \|_F^2. \label{eq:errorBoundIntermediate1}
  \end{align}
  By the smoothness and convexity of $f$, we further have
  \begin{align*}
      \| \nabla f( b - M X^* ) - \nabla f(0) \|_F^2 \leq 2L ( f(b - M X^*) - f(0) ).
  \end{align*}
  Note we assume $f(0)=0$ and $\nabla f(0) = 0$, the above reduces to $\| \nabla f( b - M X^* ) \|_F \leq \sqrt{2 L \alpha}$. Combining with \cref{eq:errorBoundIntermediate1}, we obtain
  \begin{align*}
      &f( b - M \tilde{X} )    \\
      \leq~& f( b - M X^* ) + \sqrt{2 L \alpha} \| M( X^* - \tilde{X} ) \|_F + \frac{L}{2} \| M( X^* - \tilde{X} ) \|_F^2 \\
      \leq~& f( b - M X^* ) + \sqrt{2 L \alpha} \| M \| \dist( \suppa(X^*), \widehat{\Ascr} )\cdot \sqrt{| \suppa(X^*) |} \cdot \| X^* \|_F  + \frac{L \|M\|^2}{2} \dist( \suppa(X^*), \widehat{\Ascr} )^2 | \suppa(X^*) | \| X^* \|_F^2,
  \end{align*}
  where the last inequality is by \Cref{lemma:dist_to_sol}. Combining the above with \Cref{thm:svd_approx_score} leads to the desired result.
\end{proof}

\end{document}